\documentclass[a4paper,fleqn,leqno]{article}
\usepackage[utf8]{inputenc}
\usepackage[english]{babel}
\usepackage{latexsym,amssymb,amsthm,xcolor}
\usepackage[centertags]{amsmath}
\usepackage{mathtools}
\mathtoolsset{showonlyrefs}
\usepackage{bbm}
\renewcommand{\mathbb}{\mathbbm}
\usepackage{enumitem}

\usepackage{fancyhdr}
\usepackage{appendix}                    

\newcommand{\highlight}[1]{\colorbox{yellow}{$\displaystyle #1$}}




\usepackage{hyperref}
\usepackage{accents}           

\textwidth15cm
\textheight24cm
\hoffset=-1cm
\voffset=-2cm

\pagestyle{fancy}
\fancyhf{} 
\fancyhead[RO]{\thepage}
\fancyhead[LO]{\leftmark}

\usepackage[colorinlistoftodos,
            textwidth=2.7cm,
            textsize=scriptsize,
            color=yellow!30,
            linecolor=orange,
            bordercolor=orange,
            shadow]{todonotes}

\reversemarginpar                         
\setlength{\marginparwidth}{2.7cm}        

\newtheorem{theorem}{Theorem}[section]
\newtheorem{proposition}[theorem]{Proposition}
\newtheorem{corollary}[theorem]{Corollary}
\newtheorem{lemma}[theorem]{Lemma}

\makeatletter                                           
\def\th@newremark{\th@remark\thm@headfont{\bfseries}}   
\makeatletter                                           

\theoremstyle{definition}

\theoremstyle{newremark}


\newenvironment{remark}
  {\pushQED{\qed}\remarkx}
  {\popQED\endremarkx}
\newenvironment{definition}
  {\pushQED{\qed}\definitionx}
  {\popQED\enddefinitionx}

\newcommand{\suppress}[1]{}

\DeclarePairedDelimiter\abs{\lvert}{\rvert}%
\DeclarePairedDelimiter\norm{\lVert}{\rVert}%

\makeatletter
\let\oldabs\abs
\def\abs{\@ifstar{\oldabs}{\oldabs*}}
\let\oldnorm\norm
\def\norm{\@ifstar{\oldnorm}{\oldnorm*}}
\makeatother


\newcommand{\eps}{\varepsilon}

\newcommand{\dx}{\mathrm{d}}                                   
\DeclareMathOperator{\supp}{supp}                              
\newcommand{\eqd}{\overset{\textup{d}}{=}}                     
\newcommand{\ubar}[1]{\underaccent{\bar}{#1}}                  

\newcommand{\R}{\mathbb{R}}
\newcommand{\C}{\mathbb{C}}
 \newcommand{\N}{\mathbb{N}}

 \newcommand{\prob}{\mathbb{P}}
 \renewcommand{\P}{\prob}
 
 \newcommand{\E}{\mathbb{E}}
 \newcommand{\Ex}[2][]{\E_{#1}\bigl[ #2 \bigr]}
 \newcommand{\bEx}[2][]{\E_{#1}\Bigl[ #2 \Bigr]}

\allowdisplaybreaks[4]


\newcommand{\mcB}{\mathcal{B}}
\newcommand{\mcC}{\mathcal{C}}

\newcommand{\mcF}{\mathcal{F}}
\newcommand{\mcG}{\mathcal{G}}

\newcommand{\mcM}{\mathcal{M}}

\newcommand{\mcP}{\mathcal{P}}



\newcommand{\bbN}{\mathbb{N}}

\newcommand{\bbR}{\mathbb{R}}


\newcommand{\linspan}{\operatorname{span}}                     
\newcommand{\1}{\mathbbm{1}}



\newcommand{\cadlag}{c\`{a}dl\`{a}g}

\newcommand{\nn}{\nonumber}
\newcommand{\la}{\langle}
\newcommand{\ra}{\rangle}


\newcommand{\td}{\ubar{t}}
\newcommand{\muh}{\hat{\mu}}
\newcommand{\Bh}{\widehat{B}}
\newcommand{\eh}{\hat{e}}
\newcommand{\psih}{\hat\psi}

\newcommand{\tow}{\xrightarrow{\mathrm{w}}}
\newcommand{\towh}{\xrightarrow{\mathrm{w}^\#}}
\newcommand{\weakh}{weak$^\#$}
\newcommand{\Weakh}{Weak$^\#$}
\newcommand{\whconv}{\weakh-convergence}
\newcommand{\Whconv}{\Weakh-convergence}
\newcommand{\Mh}{\mcM^\#}
\newcommand{\MF}[1][\mcF]{\Mh_{#1}}
\newcommand{\MFX}{\mcM_\mcF(X)}
\newenvironment{proofsteps}{\setcounter{enumi}{0}}{}
\newcommand{\step}{\refstepcounter{enumi}\removelastskip\smallskip\par\noindent\emph{Step \arabic{enumi}.} \hspace{0.5ex}}
\newcommand{\su}{\underline{s}}
\newcommand{\muex}{\mu_{\textrm{exc}}}
\newcommand{\get}{g\bigl(e_1(t_1),\ldots,e_n(t_n)\bigr)}
\newcommand{\tmax}{\bar{t}}


\begin{document}

\title{Boundedly finite measures:\\
	Separation and convergence by an algebra of functions\footnote{Preprint of Electron.\ Commun.\ Probab.\
	\textbf{21} (2016), no.\ 60, 1--16.}}
\author{Wolfgang L\"ohr$^{1}$, Thomas Rippl$^{2,3}$}
\date{{\today}}
\maketitle

\begin{abstract}
  We prove general results about separation and \whconv\ of boundedly finite measures on separable metric spaces
  and Souslin spaces. More precisely, we consider an algebra of bounded real-valued, or more generally a
  $*$\nobreakdash-algebra $\mcF$ of bounded complex-valued functions and give conditions for it to be separating or
  \whconv\ determining for those boundedly finite measures that integrate all functions in $\mcF$. For
  separation, it is sufficient if $\mcF$ separates points, vanishes nowhere, and either consists of only
  countably many measurable functions, or of arbitrarily many continuous functions. For convergence determining,
  it is sufficient if $\mcF$ induces the topology of the underlying space, and every bounded set $A$ admits a
  function in $\mcF$ with values bounded away from zero on $A$.
\end{abstract}
\bigskip

\noindent
{\bf Keywords:} boundedly finite measure, \whconv\ of measures, separating, convergence determining,
Stone-Weierstrass
\smallskip

\noindent
{\bf AMS  Subject Classification:} Primary 60K35\\

\footnotetext[1]{University of Duisburg-Essen, Mathematics, e-Mail: wolfgang.loehr@uni-due.de}
\footnotetext[2]{Institut f\"ur Math.~Stochastik, Universit\"at G\"ottingen, e-Mail: trippl@uni-goettingen.de}
\footnotetext[3]{Supported by DFG SPP 1590}

\renewcommand{\abstractname}{Contents}
\begin{abstract}
	\vspace*{-3.75\baselineskip}
	\renewcommand{\contentsname}{}
	\tableofcontents
\end{abstract}



\section{Introduction}
\setcounter{equation}{0}

Boundedly finite measures play an increasingly important role in probability theory.
Classical examples are It\^o excursion measures, or L\'evy measures, which we will come back to in
a moment. But they also appear in very recent research as speed measures of Brownian motions on $\R$-trees
\cite{AEW13,ALW15} or sampling measures for spatial Fleming-Viot processes \cite{GSW15}. Convergence
of metric measure spaces with boundedly finite measures has been analysed with a view towards probabilistic
applications in \cite{ALW14a}.

For the purpose of illustration, we quickly recall the situation for L\'evy measures of infinitely divisible
random variables.
A real-valued random variable $X$ is called \emph{infinitely divisible} if, for any $n \in \bbN$, we can write
$X \eqd X_{1,n}+\dots + X_{n,n}$ for an i.i.d.~sequence of random variables $(X_{i,n})_{1\leq i \leq n}$.
A famous theorem by L\'evy and Khintchine states that in that case the Fourier transform of the random variable has a very explicit form:
\begin{equation}\label{e.tr1}
 -\log \E \bigl[ \exp(-iuX)\bigr]  = iub + \frac{c}{2} u^2 - \int_{\bbR \setminus \{0\}}  e^{-iuy} - 1 +iuy
 \1_{|y|\leq 1} \;  \mu(\dx y) ,\quad u \in \R,
\end{equation}
where $b\in \R$, $c \geq 0$ and $\mu$ is a measure on $\bbR \setminus \{0\}$ satisfying $\int_{\bbR
\setminus \{0\}} 1\wedge |y|^2\,\mu(\dx y) < \infty$, see Theorem 8.1 of \cite{Sato}.
Let us look at the measure $\mu$, called \emph{L\'evy measure}.
The obvious questions are: (a) is the L\'evy measure  unique?\footnote{this question has an
affirmative answer with a neat proof in \cite{berg_positive_1976}  Theorem 3.7}
(b) If we have a sequence of infinitely divisible random variables, do their L\'evy measures converge and in
which sense?
L\'evy measures are not necessarily finite measures, but are required to be finite on any set which is not close to the origin $0$. This motivates to consider measures which are finite on a certain class of sets.
Daley and Vere-Jones used Appendix A2.6 in \cite{DVJ03} to present a framework for such questions which they
call \emph{boundedly finite measures}, because the measures are assumed to be finite on bounded sets. The space
of these measures is equipped with \whconv, defined as convergence of integrals over bounded continuous
functions with bounded support (see Section~\ref{s.main} for definitions).
Some extensions are given in \cite{hult2006regular} and \cite{lindskog2014}.
L\'evy measures fit into this framework if we change the Euclidean metric on $\R\setminus\{0\}$ such that $0$ is
sent infinitely far away, an idea also used in \cite{Barczy20061831}.

\smallskip

How does one prove weak convergence $\mu_n \tow \mu$ of probability measures on a topological space $X$ in
situations where it is not feasible to show convergence of $\int f \,\dx \mu_n$ for \emph{all} bounded
continuous $f\in \mcC_b(X)$ directly? One possibility is to find a class $\mcF\subseteq\mcC_b(X)$ of
sufficiently ``nice'' functions, which is still rich enough to be \emph{convergence determining}, i.e.
\begin{equation}
\int f \, \dx \mu_n \to \int f \, \dx \mu \ \forall \, f \in \mcF
 	\quad\implies\quad \mu_n \tow \mu .
\end{equation}
This approach has proven to be particularly fruitful if the topology on $X$ itself is defined in terms of a
class of functions. A classical example for such a topology is the weak (weak\nobreakdash-$*$) topology on (the dual of) a
Banach space. A more modern one is the Gromov-weak topology on the space of metric measure spaces, which is
induced both by a complete metric and by a class of functions called \emph{polynomials} (see \cite{GPW09}).
That polynomials do not only induce the topology but are even convergence determining was shown with some effort
in \cite{DGP11}. But it also follows directly from a general result due to Le Cam, as pointed out in
\cite{L13}. Le Cam's theorem goes back to \cite{LeCam} and states that on a completely regular Hausdorff
space $X$, a set of functions $\mcF\subseteq\mcC_b(X)$ is convergence determining for Radon probability measures
if it is multiplicatively closed and induces the topology of $X$. The proof can be found in
\cite[Proposition~4.1]{HoffJ76}.
A version of Le Cam's theorem for separable metric spaces dropping the ``Radon'' assumption on the probability
measures is given in \cite{BlountKouritzin10}. This version was used extensively for the construction of a
tree-valued pruning process in \cite{LVW15}.

Our main goal is to extend Le Cam's result to the case of boundedly finite measures and \whconv\ and, because
convergence determining is sometimes too much to ask for, to obtain (weaker) sufficient conditions for $\mcF$ to
at least \emph{separate} boundedly finite measures. A separating class of functions can also be used to prove
weak (or \weakh) convergence if tightness is known by other methods.
In particular, our results allow to give an answer on the question of uniqueness and convergence of $\mu$ in
\eqref{e.tr1} within a general framework. More importantly, they will find applications in future work about
spaces of metric measure spaces and $\R$-trees such as in the upcoming paper \cite{infdiv} which was a driving
motivation for the present article. The results will hopefully also facilitate the analysis of spatial
population models on unbounded spaces with infinite total population size such as the one in \cite{GSW15}, and
of other models appearing in modern areas of probability theory.

\suppress{
\smallskip

To compare our situation to that of \cite{lindskog2014}, assume that $C \subset X$ is closed and set $d(x,C) = \inf_{y \in C} d(x,y)$.
If we use the metric $(x,y)\mapsto d(x,y) + \abs{ d(x,C)^{-1} - d(y,C)^{-1} } =: d_C(x,y)$ on $X \setminus C$, then their setup fits into ours.\footnote{the reader may verify that this is a metric}
Obviously our setup also fits in their situation with letting $C= \emptyset$.

There has been some work on measures which are restricted to be finite on \emph{compact} sets, see
e.g.~\cite{resnick2007heavy}, page 50ff, but we do not consider that wider class of measures.
}

\smallskip

The rest of the paper is organized as follows. In Section~\ref{s.main}, we give our main results about
convergence determining (Theorem~\ref{t.main}) and separating (Theorem~\ref{t.sep} and Corollary~\ref{c.main})
classes of functions for boundedly finite measures.
In Section~\ref{s.examples}, we illustrate in four examples how our results can be applied. There, we consider
L\'evy measures, excursion theory and mass fragmentations.
%
%

\section{Separation and convergence of boundedly finite measures}\label{s.main}

Let $(X,d)$ be a separable metric space, endowed with the Borel $\sigma$-field induced by $d$.
By $\mcC_b(X)$, we denote the set of bounded continuous functions on the metric space $(X,d)$ with values in
$\C$. For real-valued functions we write $\mcC_b(X;\R)$. Note that $\mcC_b(X;\R)\subseteq \mcC_b(X)$.

\begin{definition}[Boundedly finite measures and \whconv]
The set of \emph{boundedly finite measures} $\mcM^\#(X)$ on $X$ w.r.t.~$d$ is given as
\begin{equation}\label{e.Mh}
  \mcM^\#(X) = \left\{ \mu \in \mcM(X) \mid \mu(A) < \infty \text{ for all $d$-bounded, measurable } A\subseteq X\right\}.
\end{equation}
A sequence $(\mu_n)_{n\in\N}$ in $\Mh(X)$ is
said to be \emph{\weakh-convergent} to $\mu\in\Mh(X)$, denoted by $\mu_n\towh \mu$, if $\int f\,\dx\mu_n\to \int
f,\dx\mu$ holds for all $f\in\mcC_b(X;\R)$ with $d$-bounded support.
\end{definition}

\begin{remark}[\Whconv\ versus vague convergence]
If $(X,d)$ is a \emph{Heine-Borel space}, i.e.\ every closed, bounded set is compact, then $\Mh(X)$ coincides with
the set of Radon measures on $X$, and \whconv\ with vague convergence. For a general separable metric space,
however, $\Mh(X)$ is a subset of the Radon measures and \whconv\ is a potentially much stronger convergence than
vague convergence.
\end{remark}

\noindent
Consider a set $\mcF$ of measurable, $\C$-valued functions on $X$ and define
\begin{equation}\label{mcF}
 \mcM_\mcF (X) = \Bigl\{ \mu \in \mcM(X) \Bigm| \int \abs{f(x)}\, \mu(\dx x) < \infty \ \;\forall \, f \in \mcF\Bigr\},
 	\quad \MF(X) = \mcM^\#(X) \cap \mcM_\mcF(X).
\end{equation}

\begin{theorem}[Convergence determining for boundedly finite measures]\label{t.main}
Let\/ $(X,d)$ be a separable metric space and\/ $\mcF \subset \mcC_b(X)$.  Assume that 
  \begin{enumerate}[label=\normalfont(\textbf{T.\arabic*}), ref = {T.\arabic*}]
 \item\label{i.mult} $\mcF$ is multiplicatively closed and 
 closed under complex conjugation.
 \item\label{i.top} $\mcF$ induces the topology of\/ $X$.
 \item\label{i.bound} For every bounded set\/ $A \subset X$ there exists\/ $f \in \mcF$ and\/ $\delta >0$ with\/
	 $\inf_{x \in A} |f(x)| > \delta$.
\end{enumerate}
Then\/ $\mcF$ is weak$^\#$-convergence determining for measures in\/ $\mcM_\mcF^\#(X)$, i.e.
\begin{equation}
 \mu,\mu_n \in \mcM_\mcF^\#(X),\, \int f \, \dx \mu_n \to \int f \, \dx \mu \ \forall \, f \in \mcF
 	\quad\implies\quad \mu_n \towh \mu .
\end{equation}
\end{theorem}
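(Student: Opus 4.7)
The strategy is to use assumption (\ref{i.bound}) to multiplicatively cut down to a bounded region, converting the boundedly finite measures into finite ones, and then to invoke the Le~Cam/Blount--Kouritzin convergence-determining theorem for probability measures on separable metric spaces (the version cited in the introduction).

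Fix a test function $g\in\mcC_b(X;\R)$ with bounded support $S$. Enclose $S$ in the interior of a closed bounded set $B$ (for instance, a sufficiently large closed ball). By (\ref{i.bound}) pick $h\in\mcF$ with $\inf_{x\in B}|h(x)|^2\ge\delta>0$, and set $\psi:=h\bar h\in\mcF$, using (\ref{i.mult}); then $\psi$ is real-valued and $\psi\ge\delta$ on $B$. Define
\[
  \phi(x) := \begin{cases} g(x)/\psi(x), & x\in B,\\ 0, & x\notin B,\end{cases}
\]
so that $\phi\in\mcC_b(X;\R)$ with $\supp\phi\subseteq S$ (continuity across $\partial B$ holds because $g\equiv 0$ in a neighborhood of $\partial B$), and the key identity $g=\phi\psi$ holds on all of $X$. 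Now consider the \emph{finite} measures $\tilde\mu_n:=\psi\mu_n$ and $\tilde\mu:=\psi\mu$. For every $f\in\mcF$, $f\psi\in\mcF$ by (\ref{i.mult}), so the hypothesis yields
\[
  \int f\,\dx\tilde\mu_n=\int f\psi\,\dx\mu_n\to\int f\psi\,\dx\mu=\int f\,\dx\tilde\mu;
\]
in particular the total masses $c_n:=\tilde\mu_n(X)=\int\psi\,\dx\mu_n$ converge to $c:=\tilde\mu(X)\in[0,\infty)$.

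If $c>0$, normalize to the probability measures $\hat\mu_n:=\tilde\mu_n/c_n$ and $\hat\mu:=\tilde\mu/c$, which still satisfy $\int f\,\dx\hat\mu_n\to\int f\,\dx\hat\mu$ for every $f\in\mcF$. Properties (\ref{i.mult}) and (\ref{i.top}) are exactly the hypotheses of the Le~Cam/Blount--Kouritzin theorem, which therefore gives $\hat\mu_n\tow\hat\mu$; since $\phi\in\mcC_b(X;\R)$, we conclude
\[
  \int g\,\dx\mu_n = c_n\int\phi\,\dx\hat\mu_n \;\longrightarrow\; c\int\phi\,\dx\hat\mu = \int g\,\dx\mu .
\]
The degenerate case $c=0$ is immediate, since then $\tilde\mu\equiv 0$ and $|\int g\,\dx\mu_n|\le\norm{\phi}_\infty c_n\to 0$.

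The conceptual crux of the argument is the factorization $g=\phi\psi$ with $\psi\in\mcF$; this is only possible because of (\ref{i.bound}), and it is precisely what reduces the boundedly finite problem to a finite-measure problem accessible to Le~Cam's theorem. The main technical obstacle I anticipate is applying that theorem in the present complex $*$-algebra setting, since classical formulations are stated for real algebras: one must either cite a complex version or reduce to the real case by testing separately against $(f+\bar f)/2$ and $(f-\bar f)/(2i)$, which lie in the real span of $\mcF$ thanks to (\ref{i.mult}). A secondary point requiring care is the continuity of $\phi$ at $\partial B$, which is guaranteed only because the enclosing $B$ is chosen strictly larger than $S$.
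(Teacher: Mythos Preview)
Your argument is correct. It differs genuinely from the paper's proof, and the comparison is instructive.

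The paper embeds $X$ into the Hilbert cube via a countable subfamily of $\mcF$, then proves a weighted Stone--Weierstrass lemma (Lemma~\ref{l.SW}): any continuous $g$ supported in $\{x_1\ge\delta\}$ can be approximated by a polynomial $p_\eps\in\mcP_0$ with error $\le \eps x_1$, where $x_1\in\mcF$ is the function furnished by (\ref{i.bound}). Since $\sup_n\int x_1\,\dx\mu_n<\infty$, the error integrates uniformly small. In effect, the paper reproves a Le~Cam-type statement from scratch inside the Hilbert cube.

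You instead take Le~Cam/Blount--Kouritzin as a black box and isolate the single additional idea needed for boundedly finite measures: the factorisation $g=\phi\psi$ with $\psi\in\mcF$ bounded below on a neighbourhood of $\supp g$, which turns $\psi\mu_n$ into finite measures on which the classical theorem applies directly. This is more modular and arguably cleaner; the paper's approach is more self-contained and yields the approximation machinery reused in the proof of Theorem~\ref{t.sep}.

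Two minor points you already flag deserve one sentence each. For the complex-to-real reduction, note that the real-valued elements of $\linspan_\C(\mcF)$ form a real algebra (since $\mcF$ is a $*$-closed multiplicative set) that still induces the topology, so Blount--Kouritzin applies to it; the paper does essentially the same in its Step~1. For continuity of $\phi$ at $\partial B$, you do not actually need $g$ to vanish on a whole neighbourhood of $\partial B$---it suffices that $g(x)=0$ for $x\in\partial B$ (which follows from $S\subset\operatorname{int}B$) together with $\psi\ge\delta$ on the closed set $B$.
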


\begin{remark}
 \begin{enumerate}
  \item \eqref{i.mult} and \eqref{i.top} are classical assumptions for these kind of theorems, see
	Proposition~4.1 in \cite{HoffJ76}. Something like \eqref{i.bound} is necessary to replace the fixed
	total mass in the weak convergence of probabilities. At least, we have to ensure that $\mcF$ ``vanishes
	nowhere'', because if there was $x\in X$ with $f(x)=0$ for all $f\in\mcF$, then $\mcF$ could not even
	separate $a\cdot \delta_x$ for different $a\ge 0$. For the purpose of separation of measures, we can do
	with this weaker requirement \ref{i.vanish} in Theorem~\ref{t.sep} below. We do not know, however, if
	it would be enough for Theorem~\ref{t.main}.
  \item For real-valued functions, the part ``closed under complex conjugation'' is always satisfied.
  \qedhere
 \end{enumerate}
\end{remark}

\noindent
For the proof, we embed everything in the Hilbert cube $H$, a technique going back to Urysohn's work on
metrisation and also used in \cite{BlountKouritzin10}. Recall that
\[ H= [0,1]^\bbN, \quad\text{with product topology.} \]
Denote the uniform norm on $\mcC_b(H;\R)$ by $\norm{\cdot} := \norm{\cdot}_\infty$.
For $0< \delta <1$, we consider the subspace 
\[ H_\delta := \bigl\{x=(x_n)_{n\in\N} \in H \bigm| x_1 \geq \delta \bigr\} \]
and use the following variant of the Stone-Weierstrass theorem.

\begin{definition}[$\mcP$, $\mcP_0$]\label{d.poly}
 Let $\mcP \subseteq \mcC_b(H;\R)$ be the set of polynomials on $H$ (i.e.~functions depending on finitely many
 coordinates and an algebraic multivariate polynomial in these coordinates).
 Let $\mcP_0 :=  \bigl\{ p\in \mcP \bigm| p(x) = 0 \; \forall \, x=(x_n)_{n\in\N} \in H \text{ with\/
 $x_1=0$}\bigr\}$.
\end{definition}

\begin{lemma}[Stone-Weierstrass variant]\label{l.SW}
 Let\/ $g \colon H \to [0,1]$ be continuous with\/ $\supp g \subset H_\delta$ for some\/ $\delta >0$.
 Then, for every\/ $\eps >0$ there exists a polynomial\/ $p_\eps \in \mcP_0$ such that
 \begin{equation}
   \abs{g(x) - p_\eps(x) } \leq \eps x_1 \quad \forall \, x=(x_n)_{n\in\N} \in H .
 \end{equation}
\end{lemma}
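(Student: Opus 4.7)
The idea is to factor out $x_1$ from $g$ and then apply the classical Stone-Weierstrass theorem to the quotient.

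Concretely, I would define
\[
 h\colon H\to\R,\qquad h(x):=\begin{cases} g(x)/x_1, & x_1>0,\\ 0, & x_1=0.\end{cases}
\]
The first step is to verify that $h$ is a well-defined bounded continuous function on $H$. Boundedness is immediate: for $x_1\ge \delta$ we have $|h(x)|\le 1/\delta$, and for $x_1<\delta$ the support assumption $\supp g\subset H_\delta$ forces $g(x)=0$, hence $h(x)=0$. Continuity is clear on the open set $\{x_1>\delta/2\}$ because there $h$ is a ratio of continuous functions with nonvanishing denominator, and on the open set $\{x_1<\delta\}$ because $h\equiv 0$ there; these two open sets cover $H$, so $h\in\mcC_b(H;\R)$. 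Observe also that by construction $g(x)=x_1\,h(x)$ holds for every $x\in H$ (both sides are zero on $\{x_1<\delta\}$, and the relation is the definition of $h$ on $\{x_1>0\}$).

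The second step is to invoke classical Stone-Weierstrass on the compact Hausdorff space $H$ (compactness of $H$ follows from Tychonoff's theorem). The set $\mcP$ from Definition~\ref{d.poly} is a subalgebra of $\mcC(H;\R)$ that contains the constants, is closed under complex conjugation (being real-valued), and separates points of $H$ since all coordinate projections belong to $\mcP$. Hence $\mcP$ is uniformly dense in $\mcC(H;\R)$, and for the given $\eps>0$ we can pick $q\in\mcP$ with $\|h-q\|_\infty \le \eps$.

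The final step is to set
\[
 p_\eps(x):=x_1\cdot q(x).
\]
Then $p_\eps\in\mcP$ as a product of two polynomials, and $p_\eps(x)=0$ whenever $x_1=0$, so $p_\eps\in\mcP_0$. Moreover,
\[
 \abs{g(x)-p_\eps(x)} \;=\; \abs{x_1 h(x)-x_1 q(x)} \;=\; x_1\,\abs{h(x)-q(x)} \;\le\; \eps\,x_1,
\]
for every $x=(x_n)_{n\in\N}\in H$, which is precisely the claim. There is no real obstacle here; the only point that requires a moment's care is verifying continuity of $h$ at boundary points where $x_1=0$, which is why the support condition $\supp g\subset H_\delta$ is built into the hypothesis.
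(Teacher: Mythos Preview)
Your proof is correct and follows essentially the same approach as the paper: define the quotient $h=g/x_1$ (the paper calls it $\tilde g$), approximate it uniformly by a polynomial via Stone--Weierstrass on the compact space $H$, and multiply back by $x_1$ to land in $\mcP_0$. You supply more detail than the paper does in verifying continuity and boundedness of $h$ via the overlapping cover $\{x_1>\delta/2\}\cup\{x_1<\delta\}$, which is a welcome addition.
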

\begin{proof}
 For $x=(x_n)_{n\in\N} \in H$ define
 \begin{equation}
  \tilde{g}(x) := \begin{cases}
                       x_1^{-1} g(x) \ & \text{ if } x_1 >0, \\
                       0 \ & \text{ if } x_1 = 0.
                      \end{cases}
 \end{equation}
 Then $\tilde{g} \in \mcC_b(H;\R)$ and we can use the Stone-Weierstrass theorem.
 So, for $\eps >0$ we find $\tilde{p}_\eps \in \mcP$ such that
 \[ | \tilde{g}(x) - \tilde{p}_\eps(x) | < \eps \ \forall x \in H .\]
 Define $p_\eps (x) := x_1 \tilde{p}_\eps(x)$, $x \in H$.
 Then $p_\eps \in \mcP_0$ and we get for any $a\in (0,1]$ the estimate
 \begin{equation}
  \sup_{x\in H, \; x_1=a} a^{-1} \abs{g(x) - p_\eps(x)} = \sup_{x\in H, \; x_1=a} \abs{ \tilde{g} (y) -
  \tilde{p}_\eps(y) } < \eps.
 \end{equation}
 That is what we needed to show, as the case $x_1=0$ is trivial.
\end{proof}


\begin{proof}[Proof of Theorem \ref{t.main}]
\begin{proofsteps}
 \step It is enough to consider functions with values in $[0,1]$:
    First, $\mcF$ may be replaced by $\mcF':=\bigl\{\Re(f),\, \Im(f)\bigm| f\in\mcF\bigr\}$, where $\Re(f)$ and
    $\Im(f)$ are the real and imaginary parts of $f$, respectively. Because $\mcF$ is closed under complex
    conjugation due to \eqref{i.mult}, the conditions \eqref{i.mult}, \eqref{i.top} and \eqref{i.bound} are
    also satisfied for $\mcF'$ instead of $\mcF$. Thus we may assume $\mcF$ to consist of real-valued functions.
    Second, $\mcF$ may be replaced by $\mcF':=\{f^2\mid f\in \mcF\} \cup
    \bigl\{f^2(\|f\|_\infty-f)\bigm| f\in \mcF\bigr\}$, which is contained in the vector space generated by
    $\mcF$ and easily seen to satisfy the prerequisites of the theorem provided that $\mcF$ does.
    Because $\mcF'$ maps to $\R_+$, we can assume, by normalisation, that the elements of $\mcF$ map into
    $[0,1]$.

 \step\label{s.subspace}
    By Assumption \eqref{i.top}, $\mcF$ induces the topology of $X$. Because $X$ is a separable, metric space,
    it has a countable basis, and thus we can choose a countable subfamily of $\mcF$ that still induces the
    topology of $X$. Indeed, the family of sets of the form $\bigcap_{i=1}^n f^{-1}_i(U_i)$ for $n\in\N$,
    $f_i\in \mcF$, $U_i\subseteq[0,1]$ open is a base for the topology, and because $X$ has a countable base, we
    can select a countable subfamily that is still a base\footnote{One can show this with standard arguments: If
    $\mcB,\mcB'$ are bases, $\mcB$ countable, $B\in \mcB$, then $B=\bigcup I$ for some $I\subseteq \mcB'$, and
    for every $U\in I$ there is $J_U\subseteq \mcB$ with $U=\bigcup J_U$. For every $V\in J:=\bigcup_{U\in I}
    J_U$, we select one $U_V\in I$ with $V\in J_{U_V}$. $J$ is a subset of $\mcB$, hence countable.
    Because $B=\bigcup J=\bigcup_{V\in J} U_V$, we obtain a countable basis by taking all $U_V$ for all joices
    of $B\in \mcB$.}.
    Therefore there exist $f_1, f_2, \ldots \in \mcF$ with $0 \leq f_m \leq 1$, such that
    $(f_m)_{m\in \N}$ induces the topology of $X$.
    Then $\iota\colon X \to H$, $x \mapsto (f_m(x))_{m \in \N}$ is a topological embedding (i.e.\ a
    homeomorphism onto its image) of $X$ into $H$.
    Identifying $X$ with $\iota(X)$, we assume w.l.o.g.~$X \subseteq H$ and $f_n$ to be the (restriction of the)
    $n^{\textrm{th}}$ coordinate projection.  In particular, being an algebra, the linear span of $\mcF$
    contains $\mcP_0$ (defined in Definition~\ref{d.poly}).

 \step Let $\mu_n, \mu \in \mcM_\mcF^\#(X)$ with
    \begin{equation}\label{conv}
	 \int f\, \dx \mu_n \to \int f \, \dx \mu \quad \forall \, f \in \mcF .
    \end{equation}
    For the claimed \whconv, it is enough to show $\int g \, \dx \mu_n \to \int g \, \dx \mu$ for all
    $g\colon X \to [0,1]$ which have $d$-bounded support and are uniformly continuous (by the Portmanteau
    theorem, Theorem 2.1 in \cite{lindskog2014}).
    Because weak convergence depends on the metric only through the induced topology, we may assume $g$ to be
    uniformly continuous w.r.t.\ any other metric on $X$ inducing the same topology as $d$. To this end, we
    take any metric on $H$ inducing its topology, and assume that $g$ is uniformly continuous w.r.t.\ its
    restriction to $X$ (recall that $X$ is a subspace of $H$ by Step~\ref{s.subspace}).
    By Assumption~\eqref{i.bound}, there is $f \in \mcF$ and $\delta>0$ such that for all $x\in\supp(g)$ we have
    $|f(x)| > \delta$. We may assume w.l.o.g.\ that $f=f_1$ (if not, we define $f_1'=f$, $f_{m+1}'=f_m$ and
    observe that $\iota'$ defined with these $f'_m$ is still an embedding and $g$ uniformly continuous w.r.t.\
    the restriction of the metric on $H$). 
    Then $\supp(g) \subseteq H_{\delta}$. Furthermore, since $g$ is uniformly continuous, it can be extended
    continuously to the closure of $X$ in $H$, and by the Tietze extension theorem (e.g.\
    \cite[Theorem~35.1]{Munkres00}) 
    to a continuous function from $H$ to $[0,1]$ with support in $H_{\delta}$.
    We denote the extension again by $g$. We also identify $\mu_n$ and $\mu$ with their natural extensions to $H$.

 \step $g$ satisfies the assumptions of Lemma~\ref{l.SW}. For $\eps>0$, choose $p_\eps \in \mcP_0 \subseteq
	\linspan(\mcF)$ as in the lemma. Because $\mu_n (f_1) \to \mu(f_1)$, we have $M := \sup_{n \in \N}
	\mu_n(f_1) < \infty$ and obtain for all $n\in\N$
	\[ \abs{ \int p_\eps - g  \, \dx \mu_n } \leq \int_{H} \eps x_1 \, \mu_n(\dx x)
		= \eps \int f_1\, \dx \mu_n \leq \eps M. \]
    Because $\mu_n(p_\eps) \to \mu(p_\eps)$ by \eqref{conv}, we conclude for every $\eps>0$
 \begin{equation}
  \limsup_{n \to \infty} \abs{ \mu_n(g) - \mu(g) } \leq
  	 \limsup_{n\to\infty} \abs{\mu_n(p_\eps) - \mu(p_\eps)} + \abs{ \mu_n(p_\eps - g) } + \abs{ \mu (p_\eps -g) }
	\leq  2\eps M.
 \end{equation}
    Because $\eps$ is arbitrary, the claimed convergence follows.
\end{proofsteps}
\end{proof}

It is desirable to have a result which separates two boundedly finite measures but requires less than the previous theorem.
While ``boundedly finite'' is essential for the definition of \whconv, we can drop this assumption for the
purpose of separation and work with $\mcM_\mcF$, the space of measures integrating $\mcF$ as defined in
\eqref{mcF}, instead of $\MF$. We can also relax
the metrisability assumption on the space $X$, but do need some topological assumption.
Recall that a topological space is Hausdorff if any two distinct points can be separated by open sets,
and a Hausdorff topological space $X$ is, by definition, a \emph{Souslin space} if there exists a Polish space
$Y$ and a continuous surjective map from $Y$ onto $X$. Note that a Souslin space is separable but need not be
metrisable. An example is a separable Banach space in its weak topology, which is clearly Souslin but not
metrisable. Conversely, not every separable metrisable space is Souslin. In the case of a
Souslin space $X$, and a countable family of functions $\mcF$, we can drop the topological assumptions on $\mcF$
from the prerequisites of Theorem~\ref{t.main} and still obtain the weaker conclusion that $\mcF$ is separating
for measures in $\mcM_\mcF$. More precisely, we have

\begin{theorem}[Separation of boundedly finite measures with measurable functions]\label{t.sep}
Let\/ $X$ be a Souslin space (for example a Polish space), and\/ $\mcF$ a \emph{countable} set of bounded,
measurable\/ $\C$\nobreakdash-valued functions. Assume that 
\begin{enumerate}[label=\normalfont(\textbf{S.\arabic*}), ref={S.\arabic*}]
 \item\label{i.vmult} $\mcF$ is multiplicatively closed and closed under complex conjugation.
 \item\label{i.sp} $\mcF$ separates points of\/ $X$.
 \item\label{i.vanish} $\mcF$ vanishes nowhere,
	i.e.\ for every\/ $x\in X$ there exists an\/ $f_x \in \mcF$ with\/ $f_x(x)\ne 0$.
\end{enumerate}
Then\/ $\mcF$ is separating for measures in\/ $\mcM_\mcF(X)$, i.e.
 \begin{equation}\label{e.sep}
 \mu_1, \mu_2 \in \mcM_\mcF(X), \, \int f \, \dx \mu_1 = \int f\, \dx \mu_2 \ \forall \, f \in \mcF
 \quad\implies\quad \mu_1 = \mu_2 .
 \end{equation}
\end{theorem}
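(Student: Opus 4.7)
My plan is to embed $X$ into a compact metrisable product space via $\mcF$, push the two measures forward, and then apply Stone--Weierstrass. The main obstacle is that $\mcF$ is only assumed to vanish nowhere and need not contain the constant function $1$, so the usual Stone--Weierstrass identification of finite measures via a unital algebra must be circumvented.

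First I would reduce to the real-valued case by replacing $\mcF$ with $\{\Re f,\,\Im f\mid f\in\mcF\}$; the conjugation-closure in \eqref{i.vmult} makes this new family lie in $\linspan(\mcF)$, preserves properties \eqref{i.vmult}--\eqref{i.vanish}, and preserves the class $\mcM_\mcF(X)$ and the equality of integrals. After enumerating and rescaling, I may assume $\mcF=(f_n)_{n\in\N}$ with $\lVert f_n\rVert_\infty\le 1$. The map $\iota\colon X\to H:=[-1,1]^{\N}$, $x\mapsto(f_n(x))_{n\in\N}$, is then Borel measurable and injective by \eqref{i.sp}, and $H$ is a compact metrisable space. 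Set $\nu_i:=\iota_*\mu_i$. Because $\mcF$ is multiplicatively closed, every finite product of $f_n$'s lies in $\mcF$, so by linearity the hypothesis gives $\int q(x)\,\dx\nu_1(x)=\int q(x)\,\dx\nu_2(x)$ for every real polynomial $q$ in finitely many coordinates of $H$ \emph{without constant term}.

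To supply the missing constant, for each $n$ I introduce the measure $\tilde\nu_i^{(n)}$ on $H$ with density $x_n^2$ with respect to $\nu_i$. Since $f_n^2\in\mcF$ and $\mu_i\in\mcM_\mcF(X)$, one has $\tilde\nu_i^{(n)}(H)=\int f_n^2\,\dx\mu_i<\infty$, so $\tilde\nu_i^{(n)}$ is finite. Multiplying \emph{any} coordinate polynomial $q$ by $x_n^2$ kills the constant term, hence the previous step yields $\int q\,\dx\tilde\nu_1^{(n)}=\int q\,\dx\tilde\nu_2^{(n)}$ for all polynomials $q$. Stone--Weierstrass on the compact metric space $H$ makes polynomials dense in $\mcC(H;\R)$, so $\tilde\nu_1^{(n)}=\tilde\nu_2^{(n)}$ as finite Borel measures. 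Reading this back through the positive density $x_n^{-2}$ on $\{x_n\ne 0\}$ gives $\nu_1=\nu_2$ on the Borel set $\{x_n\ne 0\}$ for every $n$. Assumption \eqref{i.vanish} forces $\iota(X)\subseteq\bigcup_n\{x_n\ne 0\}$, and $\nu_i$ is concentrated on $\iota(X)$, so disjointifying $\{x_n\ne 0\}\setminus\bigcup_{m<n}\{x_m\ne 0\}$ upgrades this to $\nu_1=\nu_2$ on all of $\mcB(H)$.

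It remains to pull the equality back to $X$. Directly, $\nu_1=\nu_2$ gives $\mu_1=\mu_2$ on $\sigma(\mcF)=\iota^{-1}(\mcB(H))$, and the containment $\sigma(\mcF)\subseteq\mcB(X)$ is obvious. The reverse containment is the place where the Souslin hypothesis is used: by the Lusin--Souslin theorem, an injective Borel map from a Souslin space into a Hausdorff space sends Borel sets to Borel sets of its image, so $\iota$ is a Borel isomorphism onto $\iota(X)$ and hence $\mcB(X)\subseteq\iota^{-1}(\mcB(H))=\sigma(\mcF)$. I expect the multiplication-by-$f_n^2$ trick (step 3) to be the main substantive obstacle, since it is the device that compensates for the absence of a multiplicative identity in $\mcF$; the descriptive-set-theoretic pull-back is a standard but essential final move.
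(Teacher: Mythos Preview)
Your argument is correct and takes a genuinely different route from the paper. The paper does \emph{not} argue directly via Stone--Weierstrass on the cube: instead it first reduces to the special case where some fixed $f_1$ is nowhere zero, by adjoining the auxiliary function $f_0=\sum_n 2^{-n} f_n/(1\vee\int f_n\,\dx\mu_1)$ to $\mcF$, and then invokes the convergence-determining Theorem~\ref{t.main} on the image $Y=\iota(X)$ equipped with a metric that sends $\{x_1=0\}$ to infinity. Your $x_n^2$-weighting device plays the role of both of these moves at once: it turns the pushforwards into finite measures (so the ordinary Stone--Weierstrass/Riesz identification applies) and simultaneously supplies the missing constant in the polynomial algebra. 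Patching over the open cover $\{x_n\ne 0\}$ then replaces the paper's reduction to a single strictly positive $f_0$. The descriptive-set-theoretic pullback via the Borel-isomorphism property of injective Borel maps on Souslin (analytic) spaces is the same in both proofs.

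What each approach buys: the paper's proof is shorter in context because it recycles Theorem~\ref{t.main}, but it is not self-contained and the construction of $f_0$ depends on $\mu_1$. Your proof is self-contained, does not touch \whconv\ or metrics at all, and the weighting trick is a clean, reusable device for non-unital algebras.

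One minor point: after passing to real and imaginary parts the resulting family is not literally multiplicatively closed, so your sentence ``preserves \eqref{i.vmult}'' is imprecise. What is true (and all you use) is that finite products of the new $f_n$ lie in $\linspan_\R(\mcF)$, since $\Re f\cdot\Re g=\tfrac12\bigl(\Re(fg)+\Re(f\bar g)\bigr)$ etc., so the equality of integrals still propagates to all constant-free coordinate polynomials. This is a cosmetic fix and does not affect the argument.
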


\begin{proof}
Assume that $\mu_1,\mu_2\in \MFX$ are such that $\int f\,\dx\mu_1=\int f\,\dx\mu_2$ holds for all $f\in \mcF$. We
have to show $\mu_1=\mu_2$.
Enumerate $\mcF=\{f_n\mid n\in\N\}$. Using Step 1 from the proof of Theorem~\ref{t.main}, we may (and do)
assume that $f_n$ takes values in $[0,1]$ for all $n\in\N$.
We proceed in two steps: first we show that \eqref{e.sep} holds if we assume that instead of \eqref{i.vanish}
the following stronger condition holds,
\begin{enumerate}[label=\normalfont(\textbf{S.\arabic*$^\dagger$}), ref =S.\arabic*$^\dagger$]
 \setcounter{enumi}{2}
 \item\label{i.pos} $f_1(x) \ne 0$ for all\/ $x\in X$.
\end{enumerate}
In the second step, we reduce the general case to the one where \eqref{i.pos} holds.
\begin{proofsteps}
\step\label{step.1} Assume that \eqref{i.pos} holds and define $\iota\colon X \to H$,
	$x \mapsto \bigl(f_n(x)\bigr)_{n\in\N}$.
	Then $\iota$ is measurable and injective by assumption \eqref{i.sp}.
	Because $X$ is a Souslin space, it is an analytic measurable space (\cite[Proposition~8.6.13]{Cohn80})
	and so is $Y:=\iota(X)$ (\cite[Corollary~8.6.9]{Cohn80}).
	By \cite[Proposition~8.6.2]{Cohn80}, $\iota$ is a Borel isomorphism onto $Y$, i.e.\ $\iota^{-1}\colon Y
	\to X$ is measurable. Therefore,
	\begin{equation}\label{e.isom}
		\mu_1=\mu_2 \;\iff\; \mu_1\circ\iota^{-1}=\mu_2\circ\iota^{-1}.
	\end{equation}
	Because of \eqref{i.pos}, every $x=(x_n)_{n\in\N}\in Y$ satisfies $x_1\ne 0$. We define the metric
		\[ r(x,y):=|x_1^{-1}-y_1^{-1}| + \sum_{n\in\N} 2^{-n} |x_n - y_n| \land 1, \qquad x,y\in Y,\]
	which induces on $Y$ the topology inhereted as a subspace of $H$.
	Let $\mcG:= \{f\circ\iota^{-1} \mid f\in \mcF\}$. We show that $(Y,r)$ and $\mcG$ satisfy the
	prerequisites of Theorem~\ref{t.main}.
	
	$\mcG$ satisfies \eqref{i.mult} because $\mcF$ does by assumption.
	By construction of $\iota$, $\mcG$ coincides with the set of restrictions
	of coordinate projections to $Y$. Therefore, $\mcG$ induces the topology of $Y$, i.e.\ \eqref{i.top} is
	satisfied. An $r$-bounded set $A$ in $Y$ satisfies $\delta:=\inf_{x\in A} |x_1|>0$, and \eqref{i.bound}
	is satisfied with $f=f_1\circ\iota^{-1}$. Thus we can apply Theorem~\ref{t.main} and obtain that $\mcG$
	is \whconv\ determining and a fortiori separating for measures in $\MF[\mcG](Y)$.

	If $\mu_1,\mu_2\in \MFX$, then they integrate $f_1$, and $\muh_i:=\mu_i\circ\iota^{-1}$, $i=1,2$, are
	boundedly finite measures on $(Y,r)$. Thus obviously $\muh_i\in \MF[\mcG](Y)$ and the claim of the
	theorem follows with \eqref{e.isom}.

\step\label{step.2} Now consider the general case, where \eqref{i.pos} does not necessarily hold. Define 
	\[ f_0 := \sum_{n\in\N} 2^{-n}  \frac{f_n}{1 \vee \int f_n\,\dx\mu_1}, \]
	and let $\mcF'$ be the set of finite products of elements of $\mcF \cup \{f_0\}$.
	Then $\mcF'$ is a countable set of measurable functions satisfying \eqref{i.vmult}, \eqref{i.sp} and,
	because $\mcF$ vanishes nowhere, also \eqref{i.pos} (with $f_1$ replaced by $f_0\in\mcF'$). Hence, by
	Step~\ref{step.1}, $\mcF'$ is separating for measures in $\mcM_{\mcF'}(X)$.

	According to the monotone convergence theorem, we have
	\[ \int f_0\,\dx\mu_1 =\int f_0\,\dx\mu_2 \leq 1. \]
	Because every element of $\mcF'$ is dominated by an element of $\mcF \cup \{f_0\}$ (recall that
	elements of $\mcF'$ map to $[0,1]$), this implies $\mu_1,\mu_2\in \mcM_{\mcF'}(X)$. 
	Moreover, dominated convergence yields
	\[ \int g\,\dx\mu_1 =\int g\,\dx\mu_2  \quad \forall g \in \mcF' ,\]
	which implies $\mu_1 = \mu_2$ by Step \ref{step.1}.
\end{proofsteps}
\end{proof}

In the case of \emph{continuous} functions, we can drop the countability of $\mcF$.

\begin{corollary}[Separation of boundedly finite measures with continuous functions]\label{c.main}
Let\/ $X$ be a Souslin space (e.g.\ a Polish space), and\/ $\mcF\subseteq \mcC_b(X)$.
Assume \eqref{i.vmult}, \eqref{i.sp}, and \eqref{i.vanish} from Theorem~\ref{t.sep}.
Then\/ $\mcF$ is separating for measures in\/ $\mcM_\mcF(X)$, i.e.\ \eqref{e.sep} holds.
\end{corollary}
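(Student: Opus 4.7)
The plan is to reduce Corollary~\ref{c.main} to Theorem~\ref{t.sep} by extracting from $\mcF$ a \emph{countable} subfamily $\mcF_0 \subseteq \mcF$ that still satisfies \eqref{i.vmult}, \eqref{i.sp}, and \eqref{i.vanish}. Since $\mcF_0 \subseteq \mcF$, both $\mu_1$ and $\mu_2$ automatically lie in $\mcM_{\mcF_0}(X)$ and agree on integrals of all $f \in \mcF_0$, so Theorem~\ref{t.sep} applied to $\mcF_0$ will deliver $\mu_1 = \mu_2$.

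The load-bearing topological input will be that every Souslin space is hereditarily Lindel\"of. Indeed, as a continuous image of a second countable Polish space, a Souslin space admits a countable network, and cosmic spaces are hereditarily Lindel\"of; the same reasoning applies to $X \times X$, a product of two Souslin spaces being Souslin.

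The extraction of $\mcF_0$ proceeds in three sub-steps. First, to obtain a countable $\mcF_1 \subseteq \mcF$ that vanishes nowhere, I use \eqref{i.vanish} to pick for every $x \in X$ some $f_x \in \mcF$ with $f_x(x) \neq 0$; then $U_x := \{y \in X : f_x(y) \neq 0\}$ is an open neighbourhood of $x$ by continuity, and the Lindel\"of property of $X$ yields a countable subcover of $\{U_x\}_{x \in X}$, hence the desired $\mcF_1$. Second, to obtain a countable point-separating $\mcF_2 \subseteq \mcF$, I use \eqref{i.sp} to pick for each pair $x \neq y$ some $g_{x,y} \in \mcF$ with $g_{x,y}(x) \neq g_{x,y}(y)$; then $V_{x,y} := \{(a,b) \in X \times X : g_{x,y}(a) \neq g_{x,y}(b)\}$ is open and contains $(x,y)$. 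Since $X$ is Hausdorff, the diagonal $\Delta$ is closed in $X \times X$, so $X \times X \setminus \Delta$ is open and therefore Lindel\"of, and a countable subcover of $\{V_{x,y}\}_{x \neq y}$ furnishes $\mcF_2$. Third, I take $\mcF_0$ to be the closure of $\mcF_1 \cup \mcF_2$ under complex conjugation and finite products; this is countable and, by \eqref{i.vmult}, stays inside $\mcF$, so it inherits all three conditions.

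The main obstacle is exactly the topological fact invoked above: hereditary Lindel\"ofness of Souslin spaces (and their squares) is what powers both countable extractions, and it is the only nontrivial ingredient beyond Theorem~\ref{t.sep}. Everything else is routine bookkeeping about countability being preserved under finite products and complex conjugation, and about $\mcF_0$ being contained in $\mcF$ so that the hypotheses on $\mu_1, \mu_2$ carry over verbatim.
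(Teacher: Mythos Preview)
Your proposal is correct and follows essentially the same route as the paper: extract a countable nowhere-vanishing subfamily $\mcF_1$ via Lindel\"ofness of $X$, a countable point-separating subfamily $\mcF_2$ via Lindel\"ofness of (an open subset of) $X\times X$, then close under products and conjugation and invoke Theorem~\ref{t.sep}. If anything, you are slightly more careful than the paper in explicitly noting that one needs $X\times X\setminus\Delta$ (not just $X\times X$) to be Lindel\"of, which you justify via hereditary Lindel\"ofness; the paper handles this implicitly by observing that $X^2$ is again Souslin.
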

\begin{proof}
For $x\in X$, let $f_x$ be as in \eqref{i.vanish}. There is an open neighbourhood $U_x$ of $x$ with $f_x(y) \ne
0$ for all $y\in U_x$. Recall that a topological space is called Lindel\"of if every open cover has a countable
subcover, and every Polish space has this property (because it has a countable base). Because the property is
obviously preserved by continuous maps, every Souslin space is Lindel\"of as well.
Hence $(U_x)_{x\in X}$ has a countable subcover, and there exists a countable subfamily $\mcF_1$ of $\mcF$
satisfying \eqref{i.vanish}.

Similarly, for $x,y\in X$ let $f_{xy}\in \mcF$ be such that $f_{xy}(x) \ne f_{xy}(y)$. Then there is an open
neighbourhood $U_{xy}$ of $(x,y)$ in $X^2$ with $f_{xy}(u) \ne f_{xy}(v)$ for all $(u,v) \in U_{xy}$.
Because $X^2$ is also Souslin and hence Lindel\"of, we find a countable subfamily $\mcF_2$ of $\mcF$ satisfying
\eqref{i.sp}. Let $\mcF'$ be the closure of $\mcF_1 \cup \mcF_2$ under multiplication and complex conjugation.
Then $\mcF'$ satisfies the prerequisites of Theorem~\ref{t.sep} and the conclusion follows.
\end{proof}

\section{Examples}\label{s.examples}

\subsection{Example 1: L\'evy-Khintchine formula on \texorpdfstring{$\R^D$}{RD}}

Let $Z$ be an infinitely divisible random variable with values in $\R^D$ for $D \in \N$.
That means for any $n \in \bbN$ there are i.i.d.\ random variables $Z_{1,n}, \dotsc, Z_{n,n}$ such that $Z \eqd Z_{1,n}+  \dots + Z_{n,n}$.
Consider 
 \[ X:=\R^D\setminus\{0\} \quad\text{with metric}\quad d(x,y) := \norm{x-y}_\infty + \abs{ \norm{x}_\infty^{-1} - \norm{y}_\infty^{-1}}, \; x,y
 \in X. \]
It is well-known that there exist $b \in \R^D$, $C \in S_+(\R^D)$ a symmetric, positive semidefinite matrix, and $\mu \in \mcM^\#(X)$  with
$\int (1\wedge \norm{x}^2_\infty) \, \mu(\dx x) < \infty$ such that
\begin{equation}\label{e.FT}
  \Psi(u) := \log \E[ \exp(i u^t Z) ] = i u^tb - \frac{1}{2} u^t C u + \int_{\R^D \setminus \{0\}} \exp(iu^t x) - 1 - iu^tx \1_{|x|\leq 1}\; \mu(\dx x) , \ u \in \R^D .
\end{equation}
This formula is called the L\'evy-Khintchine formula, see \cite[Theorem 8.1]{Sato}.
The function  $\R^D \to \C,\, u \mapsto \Psi(u)$ characterizes the distribution of the random
vector $Z$. On a first glance, however, it is not clear why the L\'evy triple
$(b,C,\mu)$  should be unique.
Theorem \ref{t.main} allows to give simple verification of that known fact in a general setup.

\begin{proposition}[Uniqueness and convergence of L\'evy measures]\label{p.LK}
 \eqref{e.FT} determines the L\'evy triple\/ $(b,C,\mu) \in \R^D \times S_+(\R^D) \times \mcM^\#(X)$ uniquely.
 Furthermore, if\/ $Z_n$ are infinitely divisible random variables converging in distribution to\/ $Z$,
 and\/ $(b_n,C_n,\mu_n)$ is the L\'evy triple of\/ $Z_n$, then\/
 $\mu_n \towh \mu$.
 %
\end{proposition}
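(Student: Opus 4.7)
The plan is to apply Theorem~\ref{t.main} on $(X, d)$ with a carefully chosen $*$-algebra $\mcF \subset \mcC_b(X)$ such that every $f \in \mcF$ is $\mu$-integrable for any L\'evy measure and such that $\int f\, d\mu$ can be extracted from $\Psi$ by an explicit formula independent of $b$ and $C$.

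The key identity is that for $u, v \in \R^D$ and $f_{u,v}(x) := (e^{iu^t x} - 1)(1 - \cos(v^t x))$, a direct calculation from \eqref{e.FT} yields
\[ \int f_{u,v}\, d\mu = -\tfrac{1}{2}\bigl[\Psi(u+v) + \Psi(u-v) - 2\Psi(u) - \Psi(v) - \Psi(-v)\bigr]; \]
the $b$-contribution cancels since $(u+v) + (u-v) - 2u - v - (-v) = 0$, and the $C$-contribution cancels by direct expansion of the five quadratic forms. Each $f_{u,v}$ is bounded, continuous, and $O(\|x\|^3)$ near the origin, hence $\mu$-integrable.

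I take $\mcF$ to be the $*$-algebra generated by $\{f_{u,v} : u, v \in \R^D\}$. Using $(e^{iu_1^t x} - 1)(e^{iu_2^t x} - 1) = (e^{i(u_1+u_2)^t x} - 1) - (e^{iu_1^t x} - 1) - (e^{iu_2^t x} - 1)$, every element of $\mcF$ is a linear combination of functions $g_{u;v_1,\ldots,v_n}(x) := (e^{iu^t x} - 1) \prod_{j=1}^n (1 - \cos(v_j^t x))$ with $n \ge 1$. The integral $\int g_{u;v_1,\ldots,v_n}\, d\mu$ is obtained from $\Psi$ by iterating the key identity with respect to the finite measures $\prod_{j \ge 2}(1 - \cos(v_j^t x))\, \mu(dx)$: the formula is shift-invariant in the Fourier transform of the measure, so only shift-independent combinations of Fourier values appear, and these are known from $\Psi$ through the base case $n = 1$. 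To verify Theorem~\ref{t.main}: (T.1) holds by construction; (T.3) follows because $\sum_{j=1}^D \abs{f_{s e_j, s e_j}}^2 = 16 \sum_j \sin^6(s x_j / 2) \in \mcF$ is uniformly bounded below on any $d$-bounded set $A$ for $s > 0$ small enough; (T.2) follows by a complex Stone--Weierstrass argument on compact subsets of $X$ (which are bounded away from $0$), using that $\mcF$ is a $*$-algebra, separates points (verified by studying ratios $(e^{iu^t x} - 1)/(e^{iu^t y} - 1)$ as $u$ varies), and vanishes nowhere by (T.3).

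Given $Z_n \Rightarrow Z$, pointwise convergence of characteristic functions together with the fact that infinitely divisible characteristic functions vanish nowhere yields $\Psi_n(u) \to \Psi(u)$ for every $u \in \R^D$ (logarithms can be chosen coherently). Hence $\int f\, d\mu_n \to \int f\, d\mu$ for every $f \in \mcF$, and Theorem~\ref{t.main} gives $\mu_n \towh \mu$. Uniqueness of $\mu$ follows from the separation part of Theorem~\ref{t.main} applied to the same $\mcF$, and once $\mu$ is determined, the map $u \mapsto \Psi(u) - \int[e^{iu^t x} - 1 - iu^t x \1_{|x| \le 1}]\, \mu(dx) = iu^t b - \tfrac{1}{2} u^t C u$ is a quadratic polynomial whose coefficients $b$ and $C$ are uniquely recovered from its imaginary-linear and real-quadratic parts. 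The main obstacle is designing $\mcF$ to simultaneously meet (T.1)--(T.3) while preserving $\mu$-integrability: the factor $1 - \cos(v^t x)$ ensures the necessary $O(\|x\|^3)$ decay near $0$ (so that $\mcF$-functions are integrable against \emph{every} L\'evy measure, which is essential for convergence since $C_n \not\to C$ in general), and the symmetric second-difference structure in $f_{u,v}$ automatically removes the $b, C$-dependence from the extraction formula.
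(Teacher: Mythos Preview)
Your approach is genuinely different from the paper's and in one respect cleaner. The paper first extracts $b$ and $C$ from the asymptotics of $\Psi$, then works with $\mcF=\linspan\{F_uF_v\}$ where $F_u(x)=e^{iu^tx}-1$; for the convergence part it changes the compensator and applies Theorem~\ref{t.main}. Your choice of generators $f_{u,v}=F_u\cdot(1-\cos(v^t\cdot))=-\tfrac12(F_uF_v+F_uF_{-v})$ has the advantage that the five–term combination of $\Psi$ you write down annihilates both the linear and the quadratic part, so $\int f_{u,v}\,\dx\mu$ (and, by your iteration, $\int g_{u;v_1,\ldots,v_n}\,\dx\mu$) is read off from $\Psi$ \emph{without} knowing $b$ or $C$. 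This is exactly what is needed for the convergence claim, because $Z_n\Rightarrow Z$ does \emph{not} imply $C_n\to C$ in general (think of compound Poisson approximations of Brownian motion), so any route that passes through $C_n$ is delicate. Your route sidesteps this entirely.

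There is, however, a genuine gap in your verification of \eqref{i.top}. Density of a point-separating, nowhere-vanishing $*$-algebra in $\mcC(K)$ for each compact $K\subseteq X$ does \emph{not} imply that the algebra induces the topology of $X$; the trigonometric polynomials on $[0,2\pi)$ separate points, vanish nowhere, and are dense in $\mcC(K)$ for every compact $K\subseteq[0,2\pi)$, yet they induce the circle topology, not the interval topology. So the Stone--Weierstrass argument you sketch does not prove \eqref{i.top}. The claim is nevertheless true and can be verified directly, in the spirit of the paper's own check: from convergence of $|f_{u,u}|^2=2(1-\cos(u^t\cdot))^3$ deduce $\cos(u^tx_n)\to\cos(u^tx)$ for all $u$; then pick $v$ with $v^tx\notin 2\pi\Z$ (possible since $x\ne 0$) and use the imaginary part of $f_{u,v}(x_n)\to f_{u,v}(x)$ together with $1-\cos(v^tx_n)\to 1-\cos(v^tx)\ne 0$ to get $\sin(u^tx_n)\to\sin(u^tx)$ for all $u$. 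Hence $e^{iu^tx_n}\to e^{iu^tx}$ for every $u$, and L\'evy's continuity theorem gives $x_n\to x$ in $\R^D$, which is the $d$-topology on $X$. With this repair your argument goes through.
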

\begin{proof}
\begin{proofsteps}
We start with the unique identification of the law.
 \step First note that
      \begin{equation}
      C_{k,j} = - \lim_{m \to \infty} m^{-2} \left[\Psi(m(\text{e}_k +\text{e}_j)) - \Psi(m \text{e}_k) -\Psi(m\text{e}_j) \right], \ 1\leq k,j \leq D,
      \end{equation}
      where $\text{e}_k$, $k=1,\dotsc, D$ are unit vectors in $\R^D$.
      Moreover, for $k=1,\dotsc, D$:
      \[ b_k = \lim_{m\to \infty} -\frac{i}{m} \left( \Psi(m \text{e}_k) -\frac{1}{2}m^2C_{k,k} \right) . \]
      Hence $C$ and $b$ are unique.
 \step Now suppose
      $\mu_1,\mu_2\in\Mh(X)$ both satisfy $\int (1\wedge \norm{x}^2_\infty) \, \mu_i(\dx x) < \infty$ and \eqref{e.FT}
      with $\mu$ replaced by $\mu_i$, $i=1,2$.

      For $u\in \R^D$, define $F_u,\psi_u,G_u\colon \R^D\to\C$ by
      \begin{equation}\label{e.psi_u} F_u(x) := \exp(iu^t x)-1,\qquad \psi_u(x) := iu^tx\1_{|x|\leq 1}, \qquad G_u(x) := F_u(x) - \psi_u(x), \ x \in \R^D \end{equation}
      and consider the following two classes of functions, where $\linspan$ denotes the linear span:
      \[ \mcG := \linspan\{G_u\mid u\in\R^D\},\qquad
	      \mcF := \linspan\{F_u \cdot F_v\mid u,v\in\R^D\}. \]
      Then with \eqref{e.FT} and the uniqueness of $b$ and $C$ from Step 1, we have
      $ \int G \,\dx\mu_1 = \int G \,\dx\mu_2$ for all $G\in\mcG$.
      Now observe that, using linearity of $u\mapsto \psi_u(x)$ for every $x\in \R^D$,
      \begin{equation}\label{e.tr72}
	      F_u\cdot F_v = F_{u+v}-F_u-F_v = G_{u+v} - G_u - G_v \in \mcG \qquad\forall u,v\in \R^D.
      \end{equation}
      Hence, $\mcF$ is multiplicatively closed (by the first equality) and $\mcF\subseteq\mcG$. In particular,
      \eqref{i.vmult} holds,
	      \[ \int f \,\dx\mu_1 = \int f \,\dx\mu_2 \qquad\forall f\in\mcF, \]
      and $\mu_1,\mu_2 \in \mcM_\mcF^\#$, because functions from $\mcG$ are integrable.
      Furthermore, $\mcF$ is contained in $\mcC_b(X)$ and \eqref{i.sp} and \eqref{i.vanish} are easily verified.
      Thus $\mu_1=\mu_2$ follows from Corollary~\ref{c.main}.

      \smallskip
      
      Now we show the convergence result.
 \step First, $\Psi_n(u) = \log \E[ \exp (iu^t Z_n) ] \to \Psi(u)$ pointwise since $x \mapsto \exp(iu^t x)$ is a bounded continuous function.
 \step Recall that for the L\'evy-triple $(b,C,\mu)$ the linear part $b$ depends on the choice of the compensation function $\psi_u$, but $C$ and $\mu$ do not.
      So in order to show $\mu_n \towh \mu$ we may choose any (admissible) $\psi_u$ we like.
      Replace $\psi_u$ in \eqref{e.psi_u} by $\psih_u(x) = iu^tx h(x)$ for a $\mcC^1$-function $h\colon \R^D \to
      \R$ with $h (0) = 1$ and compact support.
      Then the argument from above still works: $\mcF$ is multiplicatively closed and $\mcF \subset \mcC_b(X)$, so \eqref{i.mult} holds.
	
      Moreover, Assumption \eqref{i.top} holds by the fact that $F_u(x_n) \to F_u(x) \Leftrightarrow \exp(iu^t x_n) \to \exp(iu^t x)$ for all $u \in \R^D$.
      The latter is nothing else than the
      convergence of the characteristic function of the measures $\delta(x_n)$ to $\delta(x)$.
      But this implies that $x_n \to x$ in $\R^D$, so \eqref{i.top} holds.
      
      Finally, let $A \subset \R^D \setminus \{0\}$ be bounded w.r.t.~$d$.
      Then there is $\eps >0$ s.t.~$\eps < \inf \{\norm{x}_\infty \mid x \in A\} \leq \sup \{\norm{x}_\infty \mid x \in A\} < \eps^{-1}$.
      Consider $u = (u^\ast, \dots, u^\ast) \in \R^D$ with $u^\ast = (\eps \pi)/(2D)$.
      Then $u^t x \in ( \pi \eps^2/(2D), \pi/2)$ for $x \in A$ and moreover:
      \begin{align*}
	\inf_{x \in A} \abs{ F_u(x) }^2 & = \inf_{x\in A}\abs{ e^{iu^t x} - 1}^2 \geq \inf_{x \in A} \abs{ \cos(u^t x) - 1 }^2  \\
	& = \inf_{z \in  ( \pi \eps^2/(2D), \pi/2) } \abs{ \cos(z) -1 }^2 = \left( 1- \cos (\pi \eps^2 /(2D)) \right)^2 =: \delta^2 . 
      \end{align*}
      Thus, \eqref{i.bound} holds and Theorem~\ref{t.main} applies to show $\mu_n \towh \mu$.
\end{proofsteps}
\end{proof}

\begin{remark}
 Of course, the previous result is well-known, see \cite[Theorem 8.7]{Sato}.
\end{remark}

\subsection{Example 2: L\'evy-Khintchine formula on \texorpdfstring{$\mcM_f(E)$}{Mf(E)}}

Let $E$ be a Polish space and $\mcM_f(E)$ denote the finite measures on $E$.
Suppose that $Z$ takes values in finite measures on $E$ ($E= \{1,\dotsc, D\}$ is a special case of Example~1 if we restrict to nonnegative random variables there) and that $Z$ is infinitely divisible.
Then, under the assumption that $\E[Z(E)] < \infty$, Theorem 6.1 of \cite{Kall83} states that
there exists $b \in \mcM_f (E)$ and $\mu \in \mcM^\# (\mcM_f(E) \setminus \{0\})$ such that
\begin{equation}\label{e.LK.meas}
 L(\phi) := - \log \E[ \exp(- \la \phi , Z \ra) ] = \la \phi , b \ra + \int  1- \exp(-\la \phi , \nu \ra)
 \> \mu(\dx \nu), \quad \phi \in \mcC_{b}\bigl(E, [0,\infty)\bigr),
\end{equation}
where the set $\mcC_{b}\bigl(E,[0,\infty)\bigr)$ denotes the nonnegative, bounded, continuous functions on $E$.
We use the metric space $(X,d) = (\mcM_f(E) \setminus \{0\}, d)$ with 
 \[ d(\nu,\nu') = d_{\textrm{Prohorov}}(\nu,\nu') + | \nu(E)^{-1} - \nu'(E)^{-1}| \]
for the definition of $\mcM^\#(X)$.
The uniqueness of the pair $(b,\mu)$ in the L\'evy-Khintchine formula in \eqref{e.LK.meas} can be shown with our methodology.
\begin{proposition}\label{p.LK.meas}
 The pair\/ $(b,\mu) \in \mcM_f (E) \times \mcM^\# (\mcM_f(E) \setminus \{0\})$ in \eqref{e.LK.meas} is unique.
\end{proposition}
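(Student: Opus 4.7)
The plan is to mirror the strategy used in the proof of Proposition~\ref{p.LK}: apply Corollary~\ref{c.main} to a well-chosen algebra of test functions to deduce $\mu_1=\mu_2$, and then recover $b_1=b_2$ from the first-order behaviour of $L$ near the origin. Set $X:=\mcM_f(E)\setminus\{0\}$, which is Polish (hence Souslin). For $\phi\in\mcC_b(E,[0,\infty))$ introduce the bounded continuous function
\[
 g_\phi\colon X\to[0,1], \qquad g_\phi(\nu) \;:=\; 1-e^{-\la\phi,\nu\ra}.
\]
The key algebraic identity is
\[
 g_\phi\cdot g_\psi \;=\; g_\phi + g_\psi - g_{\phi+\psi},
\]
which plays the same role here as \eqref{e.tr72} did in Example~1.

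Suppose $(b_i,\mu_i)$, $i=1,2$, both satisfy \eqref{e.LK.meas}. Subtracting yields
\[
 \int g_\phi\,\dx\mu_1 - \int g_\phi\,\dx\mu_2 \;=\; \la\phi,\,b_2-b_1\ra,
\]
which is linear in $\phi$. Combined with the identity above, the $b$-residue cancels on products, i.e.\ $\int g_\phi g_\psi\,\dx(\mu_1-\mu_2)=0$ for every $\phi,\psi$. I would then apply Corollary~\ref{c.main} to
\[
 \mcF \;:=\; \linspan\bigl\{\,g_\phi\cdot g_\psi \bigm| \phi,\psi\in\mcC_b(E,[0,\infty))\,\bigr\}.
\]
Verifying the hypotheses: $\mcF\subseteq\mcC_b(X)$ is real-valued; multiplicative closure reduces, via the identity applied twice, to the observation that $g_\phi g_\psi g_\sigma = g_\phi g_\sigma + g_\psi g_\sigma - g_{\phi+\psi}g_\sigma \in \mcF$; point-separation uses $g_\phi^2\in\mcF$ together with the fact that the family $\{\la\phi,\cdot\ra:\phi\in\mcC_b(E,[0,\infty))\}$ separates finite measures on $E$; and $\mcF$ vanishes nowhere, because taking $\phi\equiv 1$ gives $g_\phi^2(\nu)=(1-e^{-\nu(E)})^2>0$ for $\nu\ne 0$. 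Integrability $\mu_i\in\mcM_\mcF(X)$ is inherited from the bound $0\le g_\phi g_\psi\le g_\phi$ combined with $\int g_\phi\,\dx\mu_i = L(\phi)-\la\phi,b_i\ra<\infty$. Corollary~\ref{c.main} then yields $\mu_1=\mu_2$.

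To recover $b_1=b_2$, I would substitute $c\phi$ into \eqref{e.LK.meas} and let $c\downarrow 0$. Because $\E[Z(E)]<\infty$ forces $\int\nu(E)\,\mu(\dx\nu)<\infty$, the pointwise bound $(1-e^{-c\la\phi,\nu\ra})/c\le\la\phi,\nu\ra$ allows dominated convergence, giving
\[
 \lim_{c\downarrow 0}\frac{L(c\phi)}{c} \;=\; \la\phi,b\ra + \int\la\phi,\nu\ra\,\mu(\dx\nu).
\]
The left-hand side depends only on $L$, and the integral on the right is the same for both triples because $\mu_1=\mu_2$, so $\la\phi,b_1\ra=\la\phi,b_2\ra$ for every $\phi\in\mcC_b(E,[0,\infty))$, hence $b_1=b_2$.

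The main obstacle is the choice of $\mcF$. Working directly with single $g_\phi$'s fails because $\int g_\phi\,\dx(\mu_1-\mu_2)$ retains a linear-in-$b$ residue that prevents any direct comparison. Passing to products $g_\phi g_\psi$ kills this residue while (thanks to the key identity) simultaneously preserving closure under multiplication, point-separation, vanishing nowhere, and integrability, so that Corollary~\ref{c.main} applies directly.
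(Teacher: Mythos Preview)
Your argument is correct, and it uses the same key tool (Corollary~\ref{c.main}) and the same algebraic identity $g_\phi g_\psi = g_\phi + g_\psi - g_{\phi+\psi}$ as the paper. The organisation, however, differs. The paper identifies $b$ \emph{first}, via
\[
	\la b,\phi\ra \;=\; \lim_{m\to\infty}\tfrac1m L(m\phi),
\]
which follows by dominated convergence because $\tfrac1m(1-e^{-m\la\phi,\nu\ra})\le 1-e^{-\la\phi,\nu\ra}$ for $m\ge1$, and the latter is $\mu$-integrable by \eqref{e.LK.meas}. Once $b$ is known to be unique, the single functions $F_\phi=g_\phi$ already satisfy $\int F_\phi\,\dx\mu_1=\int F_\phi\,\dx\mu_2$, so the paper applies Corollary~\ref{c.main} directly to $\mcF=\linspan\{F_\phi\}$ without passing to products. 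Your route reverses the order: you first neutralise the unknown $b$ by taking products (exploiting linearity of $\phi\mapsto\la\phi,b_2-b_1\ra$) to conclude $\mu_1=\mu_2$, and only then recover $b$ via the $c\downarrow0$ limit. Both work; the paper's version is a little shorter since the ``$b$-residue'' obstacle you describe disappears once $b$ has been pinned down, making the product trick unnecessary.
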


\begin{proof}
 \begin{proofsteps}
 \step First, $b$ can be identified via $ \la b, \phi \ra = \lim_{m\to \infty} \frac{1}{m} L(m \phi)$. 
 \step To identify $\mu$ we want to use Corollary~\ref{c.main}.
    Since $(X,d)$ is a Polish space it is also a Souslin space.
    Define for $\phi \in \mcC_{b}(E, [0,\infty))$ the function $F_\phi: X \to [0,1]$ via
    \begin{equation*}
    F_\phi (\nu) = 1- \exp(- \la \phi, \nu \ra), \ \nu \in X .
    \end{equation*}
    The linear span of functions of this kind is defined
    \begin{equation*}
    \mcF :=   \linspan \{ F_\phi \mid \phi \in \mcC_{b}(E, [0,\infty)) \} .
    \end{equation*}
    Then it is easy to see that $\mcF \subset \mcC_b(X).$
 \step
    Now we want to verify the remaining conditions of Corollary~\ref{c.main}.
    \eqref{i.vmult} holds since $\mcF$ are real-valued functions and $F_\phi \cdot F_\psi = F_{\phi + \psi} - F_\phi -F_\psi$ for $\phi, \psi \in \mcC_b(E,[0,\infty))$.
    \eqref{i.sp} and \eqref{i.vanish} trivially hold.
    So we can apply the corollary and deduce the uniqueness of $\mu$.
 \end{proofsteps}
\end{proof}

\begin{remark}
 The previous proof also works with Theorem~\ref{t.main} and thus
 allows to deduce a result on the convergence of the characteristics for sequences of infinitely divisible random measures.
\end{remark}

\subsection{Example 3: Excursion measure of Brownian motion}
Let $P_x \in \mcM_1\bigl(\mcC(\R_+,\R)\bigr)$ be the law of a $1$-dimensional Brownian motion started in $x \in \R$ and denote the canonical process by $(B_t)_{t\geq 0}$.
Let $T_0 := \inf \{t> 0:\, B_t = 0\}$ be the first hitting time of the origin $0$.
It is a folklore fact that the measure $\mu_n := n P_{1/n} ( (B_{t\wedge T_0})_{t\geq 0} \in \cdot )$ converges,
as $n\to \infty$, to the It\^o excursion measure $\muex$ of the reflected Brownian motion $(|B_t|)_{t\ge0}$. 
There are several ways to define $\muex$. We use the characterisation given in Theorem~XII.4.2 of
\cite{RY99} (where $\muex=2 n_+$ for $n_+$ used in \cite{RY99}) as definition.

\begin{definition}[Brownian excursion measure $\muex$]
	Let $X':=\mcC(\R_+; \R_+)$ be equipped with the topology of uniform convergence on compacta.
	For $r>0$, let\/ $\nu_r\in\mcM_1(X')$ be the law of a 3-dimensional Bessel bridge of length $r$.
	Define
	\[ \muex := \int_{\R_+} \nu_r\, \kappa(r)\,\dx r \qquad\text{for}\qquad
		\kappa(r) := (2\pi r^3)^{-1/2}. \]
	Then the $\sigma$-finite measure $\muex$ on $X'$ is called Brownian excursion measure. 
\end{definition}

Since $\muex$ is obviously not a finite measure, we have to be more precise about what we mean by convergence of
$\mu_n$ to $\muex$. 
In Theorem~1 of \cite{Hutz09} it is shown (for a more general class of diffusions) that
$\int F \,\dx \mu_n \to \int F \,\dx \muex$ holds for every $F\in\mcC_b(X')$ with the property that there is an
$\eps>0$ with $F(e)=0$ whenever $\|e\|_\infty < \eps$. This looks very much like \whconv\ on $X'\setminus\{0\}$,
where $0$ denotes the zero function and is sent infinitely far away.\footnote{Given a metric space $(X,d)$ and
$x\in X$, ``sending $x$ infinitely far away'' is a figure of speech for considering $X\setminus\{x\}$ with a
metric $d'$ topologically equivalent to $d$, but making every sequence that $d$-converges to $x$ leave every
$d'$-ball. A possible choice is $d'(y,z)=d(y,z)+|d(x,y)^{-1}-d(x,z)^{-1}|$. Formally, $d'(x,y)=\infty$ for all
$y\in X\setminus\{x\}$.}
This is, however, not precisely the case, because the map $e \mapsto \|e\|_\infty$ is not continuous w.r.t.\
uniform convergence on compacta.

In this subsection we give a setup, where we can apply Theorem~\ref{t.main} to obtain a \whconv\ $\mu_n\towh
\muex$. To this end, we have to modify the topology on (a subspace of) $X'$ in two ways. First, we
weaken uniform convergence on compacta to convergence in Lebesgue measure, because the latter is induced by
``nice'' functions and therefore much easier to handle in our framework. This, of course, substantially weakens
our result, so that it does not imply the one in \cite{Hutz09}. Second, we
strengthen the topology (and therefore our result) a bit by additionally requiring convergence of excursion
lengths for the convergence of excursions.
This allows us to send the zero function infinitely far away by a continuous function, and the result in
\cite{Hutz09} does not directly include ours.

\begin{definition}[Our excursion space]\label{d.exspace}
	Define the excursion length $\zeta\colon X' \to [0,\infty]$ by
	\[ \zeta(e) = \sup \{t>0\mid e(t) \ne 0\} \cup \{0\}, \]
	the space of excursions $X:=\zeta^{-1}\bigl((0,\infty)\bigr)$, and the metric
	\[ d(e, \eh) = \Bigl( \int_0^\infty \abs{e(t) - \eh(t)}\land 1 \,\dx t\Bigr) \land 1
				+ \abs{\zeta(e)^{-1} - \zeta(\eh)^{-1}} \]
	on $X$.
\end{definition}

The topology induced by $d$ on $X$ is the Meyer-Zheng topology (or pseudo-path topology) introduced in
\cite{MeyerZheng84} plus convergence of excursion lengths as we show in Lemma~\ref{l.dtop}.

\begin{definition}[Meyer-Zheng topology]
	Let\/ $\lambda$ be the probability measure on $\R_+$ with Lebesgue-density $t\mapsto e^{-t}$,
	and $e_n,e\colon \R_+ \to \R_+$ measurable. Then $e_n$ is said to converge to $e$ in Meyer-Zheng
	topology if the image measures of $\lambda$ under $e_n$ converge weakly to the one under $e$.
\end{definition}

\begin{lemma}[The topology induced by $d$]\label{l.dtop}
	Let\/ $e_n,e\in X$. Then the following are equivalent:
	\begin{enumerate}
		\item\label{i.dconv} $e_n \to e$ with respect to\/ $d$.
		\item\label{i.Lconv} $e_n$ converges to\/ $e$ in Lebesgue-measure, and\/ $\zeta(e_n) \to \zeta(e)$.
		\item\label{i.MZconv} $e_n$ converges to\/ $e$ in Meyer-Zheng topology, and\/ $\zeta(e_n) \to \zeta(e)$.
	\end{enumerate}
	In particular, $(X,d)$ is a separable metric space.
\end{lemma}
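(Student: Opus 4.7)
The plan is to prove the chain of equivalences by a uniform truncation argument. Since every $e\in X$ satisfies $\zeta(e)\in(0,\infty)$ and hence is supported on $[0,\zeta(e)]$, the $\zeta$-convergence occurring (directly or indirectly) in each of (\ref{i.dconv}), (\ref{i.Lconv}), (\ref{i.MZconv}) lets me restrict attention to a common bounded interval $[0,T]$ on which all three notions of function-convergence coincide.

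\textbf{Step 1: (\ref{i.dconv}) $\iff$ (\ref{i.Lconv}).} The metric $d$ splits as the sum of two nonnegative terms. The second, $|\zeta(e_n)^{-1}-\zeta(e)^{-1}|$, tends to $0$ iff $\zeta(e_n)\to\zeta(e)$, since inversion is a homeomorphism on $(0,\infty)$. Given $\zeta$-convergence and any $T>\zeta(e)$, eventually $e_n$ and $e$ both vanish on $(T,\infty)$, so the first term reduces to $\int_0^T |e_n-e|\wedge 1\,\dx t$. On the finite-measure interval $[0,T]$ the integrand is uniformly bounded by $1$, so this $L^1$-norm tends to $0$ iff $|e_n-e|\wedge 1\to 0$ in Lebesgue measure on $[0,T]$: one direction is Chebyshev, and the other follows from splitting the integral according to whether $|e_n-e|\le\eps$. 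Finally, since both functions vanish outside $[0,T]$ eventually, convergence in measure on $[0,T]$ is equivalent to convergence in Lebesgue measure on $\R_+$.

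\textbf{Step 2: (\ref{i.Lconv}) $\iff$ (\ref{i.MZconv}).} Using $\zeta$-convergence to put all essential mass on $[0,T]$ as above, note that on $[0,T]$ the reference measure $\lambda$ has density $t\mapsto e^{-t}$ bounded between $e^{-T}$ and $1$, hence is equivalent to Lebesgue measure there. Therefore convergence in Lebesgue measure on $[0,T]$ coincides with convergence in $\lambda$-measure. By the classical characterisation of the Meyer-Zheng (pseudo-path) topology, weak convergence of pseudo-paths is equivalent to convergence in $\lambda$-measure (see \cite{MeyerZheng84}), which gives the required equivalence. Separability of $(X,d)$ then follows by exhibiting an explicit countable dense family: continuous piecewise linear functions with finitely many rational break points, nonnegative rational corner values vanishing at the endpoints, and rational excursion length. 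Any such approximant converges uniformly to a given $e\in X$ on its support (using that $e$ is continuous with $e(\zeta(e))=0$) and hence in Lebesgue measure, while its excursion length matches $\zeta(e)$ up to rational approximation, so $d$-convergence holds.

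The main obstacle is Step~2: I need to reconcile the excerpt's formulation of Meyer-Zheng convergence (weak convergence of the image measures of $\lambda$ under $e_n$) with the standard pseudo-path topology, which is the one that is genuinely equivalent to $\lambda$-measure convergence. I would address this by reading ``image measure of $\lambda$ under $e_n$'' as shorthand for the pseudo-path measure on $\R_+\times\R_+$ induced by the map $t\mapsto (t,e_n(t))$; with that interpretation the equivalence with convergence in $\lambda$-measure is classical and Step~2 goes through without further work.
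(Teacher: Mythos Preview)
Your argument is correct and follows the same route as the paper: split $d$ into its two summands, handle $\zeta$-convergence via the homeomorphism $x\mapsto x^{-1}$ on $(0,\infty)$, identify the first summand with convergence in Lebesgue measure, and then pass between Lebesgue- and $\lambda$-measure via their mutual absolute continuity on a bounded interval furnished by $\zeta$-convergence, citing \cite{MeyerZheng84} for the equivalence of pseudo-path convergence with convergence in $\lambda$-measure. The paper simply quotes \cite[Exercise~4.7.61]{BogachevI} for the first equivalence and appeals to the known separability of convergence in measure, whereas you spell out the truncation and give an explicit dense family; the underlying ideas are identical.

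Two remarks. First, your observation about the Meyer-Zheng definition is well taken: read literally, weak convergence of $\lambda\circ e_n^{-1}$ on $\R_+$ forgets the time parameter and is \emph{not} equivalent to convergence in $\lambda$-measure (e.g.\ $e$ and any $\lambda$-preserving time-rearrangement of $e$ have the same image measure). The intended definition is indeed the pseudo-path one, i.e.\ the image of $\lambda$ under $t\mapsto(t,e_n(t))$, and your reading is the correct fix. Second, your explicit separability argument has a small slip: elements of $X$ need not satisfy $e(0)=0$ (the definition only imposes $0<\zeta(e)<\infty$), so approximants ``vanishing at the endpoints'' will not converge \emph{uniformly} to such $e$. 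This does no harm for $d$, since a single point is Lebesgue-null, but you should either drop ``uniformly'' or allow an arbitrary rational value at $t=0$.
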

\begin{proof}
	For the ``in particular'' note that separability of convergence in Lebesgue-measure is well-known and
	carries over to $d$-convergence by the first equivalence.

	\noindent \ref{i.dconv}$\Leftrightarrow$\ref{i.Lconv}:
	Since $\zeta(e) \in (0,\infty)$ for all $e\in X$, we have that $\abs{\zeta(e)^{-1} - \zeta(\eh)^{-1}}
	\to 0$ is equivalent to $\zeta(e_n) \to \zeta(e)$.
	It is well-known that $d_{\mathrm{Leb}}(e,\eh)=\int \abs{e(t) - \eh(t)}\land 1 \,\dx t$ induces convergence in
	Lebesgue-measure (e.g.\ \cite[Exercise~4.7.61]{BogachevI}), so the same is true for $d_{\mathrm{Leb}}\land 1$.

	\noindent \ref{i.MZconv}$\Leftrightarrow$\ref{i.Lconv}:
	In \cite[Lemma~1]{MeyerZheng84} it is shown that Meyer-Zheng topology coincides with convergence in
	$\lambda$-measure.
	Now $\zeta(e_n)\to \zeta(e)$ implies $M:=\sup_{n\in\N} \zeta(e_n) < \infty$, and $\lambda$ is equivalent
	to Lebesgue-measure on $[0,M]$. 
\end{proof}

\begin{theorem}[Brownian excursion measure]\label{t.ex}
 Let\/ $(X,d)$ be the excursion space introduced in Definition~\ref{d.exspace}, $\muex$ the Brownian excursion
 measure, and\/ $\Bh=(\Bh_t)_{t\ge0}$ Brownian motion killed in\/ $0$. In\/ $\mcM^\#(X)$,
 \[  \mu_n := n P_{1/n}(\Bh \in \cdot) \towh \muex  \quad\text{ as\/ } n \to \infty. \]
\end{theorem}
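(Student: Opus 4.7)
The plan is to apply Theorem~\ref{t.main} with a suitably constructed multiplicative family $\mcF\subseteq\mcC_b(X)$ tailored to the $d$-topology of Definition~\ref{d.exspace}. Three things have to be verified in sequence: (i) $\mcF$ satisfies \eqref{i.mult}--\eqref{i.bound}; (ii) both $\mu_n$ and $\muex$ lie in $\MF(X)$, with uniform control in $n$; and (iii) $\int f\,\dx\mu_n\to\int f\,\dx\muex$ for every $f\in\mcF$.

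For the algebra I would take
\[
F_{b,\phi}(e) := \exp\!\Bigl(-\tfrac{b}{\zeta(e)} - \int_0^\infty \phi(s, e(s))\,\dx s\Bigr), \qquad e \in X,
\]
indexed by $b>0$ and $\phi\in\mcC_c(\R_+\times\R_+;\R_+)$ with $\phi(s, 0) = 0$. The identity $F_{b,\phi}\cdot F_{b',\phi'} = F_{b+b',\phi+\phi'}$ gives multiplicative closure, and since the functions are real-valued, \eqref{i.mult} holds. Each $F_{b,\phi}$ takes values in $(0,1]$ and is $d$-continuous: the $1/\zeta$ factor is built into $d$, and continuity of $e\mapsto\int\phi(s, e(s))\,\dx s$ under convergence in Lebesgue measure follows from bounded convergence, using that $\phi$ has compact support in $s$ and $\phi(s,0)=0$ to handle the implicit zero extension past $\zeta(e)$. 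For \eqref{i.top}, suppose $F_{b,\phi}(e_n)\to F_{b,\phi}(e)$ for all admissible parameters. Setting $\phi=0$ and varying $b>0$ first yields $\zeta(e_n)\to\zeta(e)$; varying $\phi$ and decomposing $\phi'(s,y) = \phi'(s,0) + (\phi'(s,y)-\phi'(s,0))$ for arbitrary $\phi'\in \mcC_c(\R_+\times\R_+;\R)$ then yields convergence of all pseudo-path integrals, and combined with the length convergence this is exactly $d$-convergence by Lemma~\ref{l.dtop}. For \eqref{i.bound}, any $d$-bounded set $A\subseteq X$ satisfies $\zeta(A)\subseteq[\varepsilon, 1/\varepsilon]$ for some $\varepsilon>0$, whence $F_{1, 0}(e) \geq e^{-1/\varepsilon}$ on $A$.

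Points (ii) and (iii) are then handled by the first-hitting-time density $P_x(T_0\in\dx r) = \frac{x}{\sqrt{2\pi r^3}}e^{-x^2/(2r)}\,\dx r$ at $x = 1/n$, giving
\[
n\,P_{1/n}(T_0\in\dx r) = \frac{1}{\sqrt{2\pi r^3}}e^{-1/(2n^2 r)}\,\dx r \;\leq\; \kappa(r)\,\dx r
\]
uniformly in $n$. Combined with the bound $F_{b,\phi}\leq e^{-b/\zeta}$, this gives $\mu_n(F_{b,\phi})\leq \int_0^\infty e^{-b/r}\kappa(r)\,\dx r<\infty$ uniformly in $n$, and analogously $\muex(F_{b,\phi})<\infty$; applied to $F_{b,0}$ it also yields $\mu_n,\muex\in\mcM^\#(X)$, since a $d$-bounded set sits inside $\{e : \zeta(e)\in[\varepsilon,1/\varepsilon]\}$. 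Disintegrating both measures by their lifetime gives
\[
\mu_n(f) = \int_0^\infty \frac{e^{-1/(2n^2 r)}}{\sqrt{2\pi r^3}}\,\E_{1/n}\bigl[f(\Bh)\bigm| T_0 = r\bigr]\,\dx r, \qquad \muex(f) = \int_0^\infty \kappa(r)\,\nu_r(f)\,\dx r,
\]
so that (iii) reduces to showing $\E_{1/n}[f(\Bh)\mid T_0 = r]\to\nu_r(f)$ pointwise in $r$, after which dominated convergence with the integrable majorant $e^{-b/r}\kappa(r)$ finishes the argument.

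The main obstacle is this conditional-law convergence, for which the plan is to invoke the classical Williams/time-reversal decomposition of killed Brownian motion (see e.g.\ \cite{RY99}, Ch.~VII.4): the law of the killed path $(\Bh_t)_{0\leq t\leq T_0}$ under $P_x(\cdot\mid T_0 = r)$, read backwards in time, is a $3$-dimensional Bessel bridge from $0$ to $x$ of length $r$. As $x = 1/n\to 0^+$, these bridges converge in law in the uniform topology on $[0, r]$ to $\nu_r$, the Bessel bridge from $0$ to $0$ of length $r$. Extending paths by $0$ beyond $r$ and invoking Lemma~\ref{l.dtop} lifts this uniform convergence to $d$-convergence at fixed lifetime $r$, and continuity of $f\in\mcC_b(X)$ then closes the gap. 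A secondary technicality is the justification of \eqref{i.top} from only the restricted $\phi$ vanishing at $y=0$, which is ultimately harmless because excursions are supported in their (convergent) lifetimes.
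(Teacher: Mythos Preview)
Your verification of \eqref{i.top} has a genuine gap: with $\phi\in\mcC_c(\R_+\times\R_+)$ vanishing on $\{y=0\}$, the family $\{F_{b,\phi}\}$ does \emph{not} induce the $d$-topology. Here is a counterexample. Take $e\in X$ that vanishes on an interval $I\subset(0,\zeta(e))$, say two disjoint unit triangles separated by $I=[1,2]$ with $\zeta(e)=3$, and let $e_n$ agree with $e$ off $I$ while equalling $n$ times a fixed bump on $I$. Then $\zeta(e_n)=\zeta(e)$, and for any admissible $\phi$ with $y$-support in $[0,K]$ one has $\phi(s,e_n(s))=0$ on $\{s\in I: e_n(s)>K\}$, a set whose complement in $I$ has Lebesgue measure $O(K/n)$; together with $\phi(s,e(s))=\phi(s,0)=0$ on $I$ this gives $\int\phi(s,e_n(s))\,\dx s\to\int\phi(s,e(s))\,\dx s$. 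Hence $F_{b,\phi}(e_n)\to F_{b,\phi}(e)$ for every $(b,\phi)$, yet $e_n\not\to e$ in Lebesgue measure. Your decomposition $\phi'=\phi'(\cdot,0)+(\phi'-\phi'(\cdot,0))$ does not rescue this, because the second summand loses compact support in $y$ as soon as $\phi'(\cdot,0)\not\equiv 0$, so it is not in your admissible class. The paper's test functions avoid exactly this escape-to-infinity: there $g$ is only required to be bounded with $x\mapsto(1\wedge x)^{-1}g(x)\in\mcC_b$, so a function like $g(y)=y\wedge 1$ is available and detects the blow-up on $I$.

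If you repair the class by allowing bounded $\phi$ with compact $s$-support (still $\phi(\cdot,0)=0$, $\phi\ge 0$), multiplicativity and continuity are preserved, your decomposition argument becomes legitimate, and \eqref{i.top} follows. Your route to the convergence of integrals --- disintegrating by $T_0=r$, identifying $P_{1/n}(\,\cdot\mid T_0=r)$ with a $\mathrm{BES}(3)$ bridge from $1/n$ to $0$ of length $r$, sending the endpoint to $0$, and closing with dominated convergence in $r$ against the majorant $e^{-b/r}\kappa(r)$ --- is then a valid alternative to the paper's computation. The paper instead writes a generic $F\in\mcF$ as $h(\zeta)\prod_i\int f_i(t)g_i(e(t))\,\dx t$, uses the $h$-transform relation $H_t(x,\dx y)=x^{-1}Q_t(x,\dx y)\,y$ between the killed Brownian and $\mathrm{BES}(3)$ semigroups together with the Markov property at the largest marginal time, and passes to the limit via the Feller property of $\mathrm{BES}(3)$. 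Your argument is more conceptual but leans on continuity of the Bessel-bridge law in its endpoint; the paper's is more hands-on but self-contained.
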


In order to use Theorem~\ref{t.main} to prove Theorem~\ref{t.ex}, we need a set $\mcF$ of continuous functions on $X$
satisfying \eqref{i.mult} -- \eqref{i.bound}.
To this end, define for $f \in L^1(\R_+)$ and $g \in \mcC_b(\R_+)$
\[ F_{f,g} (e) = \int_0^\infty f(t) g(e(t)) \, \dx t.  \]
Denote by $\mcC_c$ the continuous functions with compact support and define a set of bounded functions on $X$ by
\[ \mcF' =  \left\{  F_{f,g}  \mid f \in \mcC_c(\R_+) ,\, x \mapsto (1\wedge x)^{-1}g(x) \in
\mcC_b(\R_+) \right\} \cup \{h \circ \zeta \mid h \in \mcC_b(\R_+)\}. \]

\begin{definition}[$\mcF$]
	Let $\mcF$ be the multiplicative closure of $\mcF'$.
\end{definition}

\begin{lemma}\label{l.convdet}
	$\mcF$ is weak$^\#$-convergence determining for measures in\/ $\mcM_\mcF^\#(X)$.
\end{lemma}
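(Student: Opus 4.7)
The plan is to apply Theorem~\ref{t.main} to $\mcF\subseteq\mcC_b(X)$ on the separable metric space $(X,d)$, verifying the three conditions \eqref{i.mult}, \eqref{i.top}, \eqref{i.bound}. Condition \eqref{i.mult} is immediate, since $\mcF$ is the multiplicative closure of $\mcF'$ by construction and all functions are real-valued. Condition \eqref{i.bound} is trivial: with $h\equiv 1\in\mcC_b(\R_+)$, the constant $1=h\circ\zeta$ lies in $\mcF'\subseteq\mcF$ and is bounded away from $0$ everywhere. That each element of $\mcF'$ lies in $\mcC_b(X)$ follows from Lemma~\ref{l.dtop}: a $d$-convergent sequence $e_n\to e$ satisfies $\zeta(e_n)\to\zeta(e)$ (giving continuity of $h\circ\zeta$) and $e_n\to e$ in Lebesgue measure, whence $g\circ e_n\to g\circ e$ in measure by continuity of $g$, and $F_{f,g}(e_n)\to F_{f,g}(e)$ by dominated convergence with dominant $\|g\|_\infty|f|\in L^1(\R_+)$.

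The substantive step is \eqref{i.top}, for which I claim that $F(e_n)\to F(e)$ for every $F\in\mcF$ implies $e_n\to e$ in $d$. (Arguing along sequences suffices because, by separability of $(X,d)$, a countable sub-family of $\mcF$ still induces the topology, as used already inside the proof of Theorem~\ref{t.main}.) The test functions $h\circ\zeta$, $h\in\mcC_b(\R_+)$, give weak convergence of point masses $\delta_{\zeta(e_n)}\to\delta_{\zeta(e)}$, hence $\zeta(e_n)\to\zeta(e)$; by Lemma~\ref{l.dtop} it remains to prove $e_n\to e$ in Lebesgue measure. Fix $M>\zeta(e)$; for $n$ large, also $\zeta(e_n)<M$, so $e,e_n$ vanish on $[M,\infty)$. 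Set $K_0:=\|e\|_\infty<\infty$ (continuity of $e$ on the compact $[0,M]$) and fix $K>K_0+1$. Both $g(x)=x\wedge K$ and $g^2(x)=(x\wedge K)^2$ satisfy $(1\wedge x)^{-1}g\in\mcC_b(\R_+)$, so $F_{f,g},F_{f,g^2}\in\mcF$ for every $f\in\mcC_c(\R_+)$, and (using $e\wedge K=e$) the hypothesis yields
\begin{equation}
 \int f(t)(e_n\wedge K)(t)\,\dx t\to\int f(t)\,e(t)\,\dx t,\qquad
 \int f(t)(e_n\wedge K)^2(t)\,\dx t\to\int f(t)\,e(t)^2\,\dx t.
\end{equation}

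Since $\|e_n\wedge K\|_\infty\le K$, density of $\mcC_c([0,M])$ in $L^2([0,M])$ upgrades the first convergence to weak $L^2([0,M])$-convergence $(e_n\wedge K)\to e$, while the second gives $\|e_n\wedge K\|_{L^2}\to\|e\|_{L^2}$; together, by the Radon-Riesz principle in Hilbert space, $(e_n\wedge K)\to e$ strongly in $L^2([0,M])$, hence in Lebesgue measure. On the set $\{e_n>K\}$ one has $|(e_n\wedge K)-e|\ge K-K_0>1$, so $\Leb\{e_n>K\}\to 0$; off this set $|e_n-e|=|(e_n\wedge K)-e|$. Therefore $e_n\to e$ in Lebesgue measure on $[0,M]$, completing \eqref{i.top}, and Theorem~\ref{t.main} then yields the claim. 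The main (and only) non-routine step is this truncation-plus-second-moment argument, which converts the integral tests afforded by $\mcF$ into genuine convergence in measure.
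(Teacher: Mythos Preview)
Your proof is correct and reaches the same conclusion via Theorem~\ref{t.main}, but the verification of \eqref{i.top} takes a genuinely different route from the paper's.

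The paper establishes \eqref{i.top} by recasting $d$-convergence (given $\zeta(e_n)\to\zeta(e)$) as weak convergence of the pseudo-path measures $\lambda\circ(\mathrm{id},e_n)^{-1}$ on $\R_+^2$, and then applies the classical Le~Cam theorem to the multiplicatively closed family $\varphi_{f,g}(t,x)=f(t)g(x)e^t$, noting that $\int\varphi_{f,g}(t,e(t))\,\lambda(\dx t)=F_{f,g}(e)$. Your argument instead singles out two specific test functions $g(x)=x\wedge K$ and $g^2(x)=(x\wedge K)^2$, extracts weak $L^2([0,M])$-convergence of $e_n\wedge K$ together with convergence of the $L^2$-norms, and invokes the Radon--Riesz property of Hilbert space to obtain strong $L^2$-convergence, hence convergence in measure; a short truncation argument then removes the $\wedge K$. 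This is more hands-on and avoids appealing to Le~Cam (which is essentially Theorem~\ref{t.main} for probability measures), at the price of importing Radon--Riesz. Your observation for \eqref{i.bound} that the constant $1=h\circ\zeta$ with $h\equiv 1$ already lies in $\mcF'$ is also simpler than the paper's characterisation of $d$-bounded sets via $\inf\zeta>0$.

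One stylistic point: your parenthetical justification that ``arguing along sequences suffices'' is a little circular as phrased. A clean way to say it is that your argument actually uses only a \emph{countable} sub-family (countably many $h_k\circ\zeta$, the functions $g_K=x\wedge K$ and $g_K^2$ for $K\in\N$, and a countable $L^2$-dense set of $f$'s in $\mcC_c(\R_+)$); the initial topology with respect to countably many real functions is metrizable, so the sequential characterisation is legitimate, and then $\tau_{\mcF_0}=\tau_d$ sandwiches $\tau_\mcF$. The paper's proof glosses over the same point.
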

\begin{proof}
	$\mcF$ obviously satisfies \eqref{i.mult} and \eqref{i.bound}. Indeed,
	$\mcF$ is multiplicatively closed by definition, and $A\subseteq X$ is $d$-bounded if and only if
	$\inf_{e\in A} \zeta(e) > 0$. 
	Once we have shown \eqref{i.top}, i.e.\ that $\mcF$ induces the same topology as $d$, the claim follows
	from Theorem~\ref{t.main}. To this end, note that $\zeta(e_n) \to \zeta(e)$ is necessary for both $d$-
	and $\mcF$-convergence, and recall that under this condition, by Lemma~\ref{l.dtop}, $d$-convergence is
	equivalent to
	\begin{equation}\label{e.MZconv}
	 \int_0^\infty \varphi(t, e_n(t))\, \lambda(\dx t) \to \int_0^\infty \varphi(t, e(t))\, \lambda(\dx t)
		\qquad\forall \varphi \in \mcC_b(\R_+^2).
	\end{equation}
	The set of $\varphi_{f,g}$ of the form $\varphi_{f,g}(t,x)=f(t)g(x)e^t$ with $f\in\mcC_c(\R_+)$, $g\in
	\mcC_b(\R_+)$ and $x\mapsto x^{-1}g(x) \in \mcC_b(\R_+)$ is a multiplicatively closed subset of
	$\mcC_b(\R_+^2)$, and induces the Euclidean topology on $\R_+^2$. Thus, by the classical Le Cam
	theorem, it is convergence determining, and \eqref{e.MZconv} is equivalent to
	\[ F_{f,g}(e_n)=\int_0^\infty \varphi_{f,g}(t, e_n(t))\, \lambda(\dx t)  \to F_{f,g}(e) \qquad \forall F_{f,g} \in \mcF', \]
	which implies the claim.
\end{proof}

\begin{proof}[Proof of Theorem~\ref{t.ex}]
  In view of Lemma~\ref{l.convdet} it is sufficient to show
  \begin{equation}\label{e.toshow}
  	 \lim_{\eps\to0} \tfrac1\eps \Ex[\eps]{F(\Bh)} = \int F \,\dx\muex \qquad \forall F\in\mcF.
  \end{equation}
  Fix $F\in\mcF$. There is $h\in\mcC_b(\R_+)$, $n\in\N_0$, $f_i \in L^1 \cap \mcC_c(\R_+)$,
  $x \mapsto (1\wedge x) ^{-1}g_i(x) \in \mcC_b(\R_+)$ for $1\leq i \leq n$, such that
  \begin{equation}\label{e.Frep}
  	F(e) = h\circ \zeta(e) \cdot \prod_{i=1}^n F_{f_i,g_i}(e) = \int_{\R_+^n} f(\td) h(\zeta(e)) \get \,\dx\td,
  \end{equation}
  where we set $\td=(t_1,\ldots,t_n)$, $f(\td)=\prod_{i=1}^n f_i(t_i)$ and $g(\td) = \prod_{i=1}^n
  g_i(t_i)$. Let $\tmax=\max \{t_1,\ldots,t_n\}$. Using that $\zeta(e)=r$ $\nu_r$-a.s., and $g_i(e(t_i))=0$ whenever
  $t_i> \zeta(e)$, we obtain
  \begin{align}\label{e.intFmuex}
  \int F\,\dx\muex &= \int_0^\infty \kappa(r) h(r) \int \prod_{i=1}^n F_{f_i,g_i}\, \dx\nu_r \, \dx r \\
    &= \int_{\R_+^n} f(\td) \int_0^\infty h(\tmax+r)\kappa(\tmax+r) \int \get \,\nu_{\tmax+r}(\dx e) \,\dx r\,\dx\td
    \nn\\
    &= \int_{\R_+^n} f(\td) \int_0^\infty h(\tmax+r) \bEx[0]{
    	\frac{g(\rho_{t_1},\ldots,\rho_{t_n})}{\rho_{\tmax}}{\ell^{\rho_{\tmax}}(r)}} \,\dx r\,\dx\td,\nn
  \end{align}
  where $\rho=(\rho_t)_{t\ge 0}$ denotes a 3-dimensional Bessel process, $\ell^\alpha$ the density of a L\'evy
  distribution with scale parameter $\alpha$, and we have used the relation of densities of the Bessel bridge
  and process (as obtained, e.g., in \cite[XI.\S3]{RY99}).

  On the other hand, recall that $T_0=\zeta(\Bh)$ is the hitting time of $0$ of $\Bh$ and observe
  \begin{equation}\label{e.intFB}
  	\tfrac1\eps\Ex[\eps]{F(\Bh)} = \int f(\td) \Ex[\eps]{h(T_0)\tfrac1\eps \prod_{i=1}^n g_i(\Bh_{t_i})} \,\dx\td.
  \end{equation}
  For fixed $\td$, reordering if necessary, we assume for notational convenience that $t_1\le \cdots\le
  t_n=\tmax$ and set $t_0=0$. Then, using the
  Markov property at $t_n$ in the first step,
  \begin{align}
      \bEx[\eps]{h(T_0)\tfrac1\eps \prod_{i=1}^n g_i(\Bh_{t_i})} &=
      	  \bEx[\eps]{ \Ex[\Bh_{t_n}]{h(t_n+T_0)} \frac1{\Bh_{t_n}}\prod_{i=1}^n \frac{\Bh_{t_i}}{\Bh_{t_{i-1}}} g_i(\Bh_{t_i}) }
	  	\nonumber\\
       &= \bEx[\eps]{ \Ex[\rho_{t_n}]{h(t_n+T_0)} \frac1{\rho_{t_n}}\prod_{i=1}^n g_i(\rho_{t_i}) },
	       \label{e.fineps}
  \end{align}
  where we have used that the sub-Markovian semigroup $(Q_t)_{t\ge 0}$ of the killed Brownian motion $\Bh$ and
  the Markovian semigroup $(H_t)_{t\geq 0}$ of the Bessel process $\rho$ are related by
  \begin{equation}
   H_t(x,\dx y) = \begin{cases}
                    x^{-1}Q_t(x,\dx y) y & \text{ for } x >0,\\
                    2\kappa(t) y^2 \exp(-y^2/(2t)) \, \dx y & \text{ for } x= 0.
                   \end{cases}
  \end{equation}
  Because $\Bh$ is a Feller-process and $h\in\mcC_b$, the function $x\mapsto \Ex[x]{h(t_n+T_0)}$ is also a
  bounded continuous function. Because also $x\mapsto x^{-1}g_n(x)$ is bounded and continuous by assumption,
  we see that the term inside the outer expectation in \eqref{e.fineps} is a bounded continuous function in
  $\rho_{t_1}, \ldots, \rho_{t_n}$. Using that $\rho$ is a Feller process, we obtain from \eqref{e.intFB} and
  \eqref{e.fineps}
  \begin{equation}\label{e.conv}
    \lim_{\eps\to 0} \tfrac1\eps\Ex[\eps]{F(\Bh)} =
	\int f(\td) \bEx[0]{ \Ex[\rho_{\tmax}]{h(\tmax+T_0)} \frac{g(\rho_{t_1},\ldots,\rho_{t_n})}{\rho_{\tmax}} } \,\dx\td.
  \end{equation}
  The hitting time $T_0$ of $0$ under $P_x$ is known to be L\'evy-distributed with scale parameter $x$, hence
  \begin{equation}\label{e.hitdens}
  	\Ex[\rho_{\tmax}]{h(\tmax+T_0)} = \int_0^\infty h(\tmax+r)\ell^{\rho_{\tmax}}(r) \,\dx r.
  \end{equation}
  Inserting \eqref{e.hitdens} into \eqref{e.conv}, and applying \eqref{e.intFmuex}, we obtain the claimed
  convergence \eqref{e.toshow}.
\end{proof}

\subsection{Example 4: Mass fragmentations}\label{ex.fragm}

Consider the set
\[ S^\downarrow = \Bigl\{ \su = (s_1, s_2, \dots) \in [0,1]^\N \Bigm| \sum_{i \geq 1} s_i \leq 1, \; s_1 \geq s_2 \geq s_3 \geq \cdots \geq 0 \Bigr\} \]
of decreasing sequences $(s_1,s_2,\dotsc)$ with sum less than $1$.
This set is used to model (mass) fragmentation processes.
A perfect introduction to the topic is given in Bertoin's book \cite{Bertoin}.
In his Section 2.1, the set $S^\downarrow$ is introduced and endowed with the topology of pointwise convergence.
Our goal is to present two sets of real-valued convergence-determining functions on
$S^\downarrow$, namely
\begin{align}
  \mcG_1 &= \Bigl\{ S^\downarrow \ni \su = (s_1,s_2, \dotsc) \mapsto G_p(\su) = \sum_{i\geq 1} s_i^p 
 \Bigm| p \in \N \Bigr\}, \\
 \mcG_2 &= \Bigl\{ S^\downarrow \ni \su = (s_1,s_2, \dotsc) \mapsto H_\alpha(\su) = \sum_{i\geq 1}
  (1-e^{-\alpha s_i}) \Bigm| \alpha > 0 \Bigr\} .
\end{align}

\begin{theorem}\label{c.fragm.sep}
 The sets\/ $\mcG_1$ and\/ $\mcG_2$ are convergence-determining on\/ $S^\downarrow$ in the sense that for\/
 $\su(n), \su \in S^\downarrow$, $n \in \N$ the following holds for\/ $j = 1,2$:
 \[  G(\su(n)) \xrightarrow{n\to \infty} G(\su) \; \forall G \in \mcG_j \quad\implies\quad  \su(n) \xrightarrow{n\to \infty} \su . \]
\end{theorem}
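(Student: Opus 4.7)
The plan is to exploit the compactness of $S^\downarrow$ in the product topology: as a closed subset of the Hilbert cube $[0,1]^\N$, it is compact and metrisable. Since in a metrisable space $\su(n)\to \su$ holds iff every subsequence admits a further subsequence converging to $\su$, and since compactness guarantees that every subsequence has a pointwise convergent sub-subsequence, it suffices to show that whenever $\su(n_k)\to \underline{t}$ pointwise in $S^\downarrow$ and $G(\su(n_k))\to G(\su)$ for all $G\in \mcG_j$, then $\underline{t}=\su$.

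For $\mcG_1$, first note that monotonicity together with $\sum_j s_j\leq 1$ forces $s_i\leq 1/i$ on $S^\downarrow$, which gives the uniform tail bound $\sum_{i>N}s_i^p \leq s_{N+1}^{p-1}\sum_i s_i\leq (N+1)^{-(p-1)}$ for every $p\geq 2$. Combined with pointwise convergence of the initial finite sums, this shows that $G_p\colon S^\downarrow\to\R$ is continuous for each $p\geq 2$; hence $G_p(\underline{t})=G_p(\su)$ for all $p\geq 2$. Now associate to each $\su\in S^\downarrow$ the finite measure $\nu_{\su}:=\sum_{i}s_i\delta_{s_i}$ on $(0,1]$ so that $G_p(\su)=\int s^{p-1}\,\nu_{\su}(\dx s)$. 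Equality of these moments for all $p\geq 2$ implies $\int f\,\dx\nu_{\su}=\int f\,\dx\nu_{\underline{t}}$ for every polynomial $f$ vanishing at $0$, hence by the Stone-Weierstrass theorem for every $f\in \mcC([0,1])$ with $f(0)=0$; therefore $\nu_{\su}=\nu_{\underline{t}}$ as measures on $(0,1]$. Since the atom of $\nu_{\su}$ at $a>0$ has mass $a\cdot\#\{i:s_i=a\}$, the measure determines the nonzero entries of $\su$ together with their multiplicities, and we conclude $\su=\underline{t}$.

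For $\mcG_2$, I would reduce to the previous case by analyticity. Expanding gives $H_\alpha(\su)=\sum_{p=1}^\infty \frac{(-1)^{p+1}\alpha^p}{p!}\,G_p(\su)$, viewed as an entire function of $\alpha\in\C$. The estimate $\abs{1-e^{-z}}\leq\abs{z}e^{\abs{z}}$ combined with $G_1(\su)\leq 1$ yields $\abs{H_\alpha(\su)}\leq\abs{\alpha}e^{\abs{\alpha}}$ uniformly in $\su\in S^\downarrow$, so the entire functions $\alpha\mapsto H_\alpha(\su(n_k))$ are locally uniformly bounded on $\C$. Vitali's convergence theorem then upgrades pointwise convergence on $(0,\infty)$ (a set with limit points in $\C$) to locally uniform convergence on $\C$, and Cauchy's integral formula yields convergence of all Taylor coefficients, i.e.\ $G_p(\su(n_k))\to G_p(\su)$ for every $p\in\N$. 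The $\mcG_1$ argument above now concludes $\underline{t}=\su$.

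The main obstacle is the $\mcG_2$ case, because $H_\alpha$ itself is \emph{not} pointwise continuous on $S^\downarrow$: for instance, $\su(n)=(1/n,\ldots,1/n,0,\ldots)$ with $n$ terms converges pointwise to the zero sequence, but $H_\alpha(\su(n))\to\alpha\ne 0=H_\alpha(\underline{0})$. The analyticity/Vitali trick sidesteps the delicate Fatou-type bookkeeping one would otherwise need to handle this ``mass escaping to zero'' phenomenon directly.
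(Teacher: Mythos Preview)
Your argument is correct and complete, but it follows a genuinely different route from the paper's.

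The paper proves this theorem \emph{as an application of its main result} (Theorem~\ref{t.main}). It introduces the map $\Phi\colon S^\downarrow\to\mcM^\#((0,1])$, $\su\mapsto\sum_i\delta_{s_i}$, where $(0,1]$ carries the metric $d(x,y)=|x^{-1}-y^{-1}|$, shows in Lemma~\ref{p.fragm} that $\Phi$ is a homeomorphism onto its image, rewrites $G_p(\su)=\int x^p\,\Phi(\su)(\dx x)$, and then invokes Theorem~\ref{t.main} for the algebra $\linspan\{x\mapsto x^p:p\in\N\}$ (respectively $\linspan\{x\mapsto 1-e^{-\alpha x}:\alpha>0\}$ for $\mcG_2$) to obtain $\Phi(\su(n))\towh\Phi(\su)$ and hence $\su(n)\to\su$.

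Your proof is self-contained and more elementary: it bypasses the boundedly-finite-measure machinery altogether, exploiting instead the compactness of $S^\downarrow$, the continuity of $G_p$ for $p\ge 2$ (via the uniform tail bound $s_i\le 1/i$), and classical moment determinacy on $[0,1]$. Your reduction of the $\mcG_2$ case to the $\mcG_1$ case by Vitali's theorem is particularly clean and avoids verifying \eqref{i.mult}--\eqref{i.bound} for the exponential class; the paper simply says ``the other proof is similar'' and would check those conditions directly. What the paper's approach buys is a demonstration that Theorem~\ref{t.main} subsumes such results in a unified framework; what yours buys is independence from that framework and a shorter path to this specific statement.
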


\noindent
Before we give the proof, we relate the set $S^\downarrow$ to boundedly finite measures.
Therefore, let 
\[ X = (0,1]  \text{ with metric } d(x,y) = \abs{ x^{-1}-y^{-1} } . \]
Consider the mapping
\[ \Phi: \begin{cases}      S^\downarrow &\to \mcM^\#(X) , \\
                        (s_1, s_2 \dots) &\mapsto \sum_{i\geq 1} \delta_{s_i} .
         \end{cases}
\]

\begin{lemma}\label{p.fragm}
 The mapping\/ $\Phi$ is a homeomorphism from\/ $S^\downarrow$ to\/ $\Phi(S^\downarrow)$.
\end{lemma}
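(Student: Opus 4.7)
To show $\Phi$ is a homeomorphism onto $\Phi(S^\downarrow)$, I would verify four things: (a) $\Phi$ takes values in $\mcM^\#(X)$; (b) $\Phi$ is injective; (c) $\Phi$ is continuous; and (d) $\Phi^{-1}$ is continuous. Note that when $s_i=0$ the point $s_i\notin X$, so $\Phi(\su)$ should be read as $\sum_{i:s_i>0}\delta_{s_i}$.

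For (a), any $d$-bounded subset of $X=(0,1]$ is contained in $[\delta,1]$ for some $\delta>0$, and $\sum_i s_i\le 1$ forces $\#\{i:s_i\ge\delta\}\le 1/\delta$, so $\Phi(\su)([\delta,1])<\infty$. For (b), $\Phi(\su)$ is a purely atomic measure whose atoms with multiplicities recover the multiset $\{s_i : s_i>0\}$; since the entries of $\su$ are listed in decreasing order, $\su$ is determined uniquely.

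For (c), suppose $\su(n)\to\su$ pointwise and fix $f\in\mcC_b(X;\R)$ supported in $[\delta,1]$. Let $k:=\#\{i:s_i\ge\delta/2\}$, a finite number by (a). Pointwise convergence gives $s_{k+1}(n)\to s_{k+1}<\delta/2$, so for all $n$ large enough $s_j(n)<\delta$ for every $j\ge k+1$ (using that the sequences are decreasing). Hence the tail contributes nothing to $\int f\,\dx\Phi(\su(n))$, while for $i\le k$ one has $s_i>0$ and $f(s_i(n))\to f(s_i)$ by continuity. Summing yields $\int f\,\dx\Phi(\su(n))\to\int f\,\dx\Phi(\su)$, i.e.\ $\Phi(\su(n))\towh\Phi(\su)$.

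For (d), the cleanest route is via compactness, which bypasses an otherwise fiddly direct argument (recovery of $\su$ from the tail behaviour of the counting function $t\mapsto \Phi(\su)([t,1])$). The set $S^\downarrow$ is closed in the compact product $[0,1]^\N$, since the conditions $s_i\ge s_{i+1}$ and $\sum_{i=1}^N s_i\le 1$ (for every $N$) are closed under pointwise convergence, so $S^\downarrow$ is compact. On the other hand, $\mcM^\#(X)$ is Hausdorff in the weak$^\#$ topology, e.g.\ by the separation statement in Theorem~\ref{t.main} applied to any algebra satisfying \eqref{i.mult}--\eqref{i.bound} on $X$ (or directly since distinct Radon measures on $X$ are separated by some $f\in\mcC_b(X;\R)$ with $d$-bounded support). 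A continuous bijection from a compact space to a Hausdorff space is automatically closed, hence a homeomorphism onto its image, which gives continuity of $\Phi^{-1}$. The main obstacle is thus sidestepped by the compactness observation.
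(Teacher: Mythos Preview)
Your proof is correct. Parts (a)--(c) match the paper's argument in substance, with your step (c) spelled out directly where the paper invokes the Portmanteau-type characterisation from \cite{DVJ03}.

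The genuine difference is in (d). The paper argues directly: given $\Phi(\su(n))\towh\Phi(\su)$, fix $z\in(0,1)$ not among the $s_i$, use that $\Phi(\su(n))|_{[z,1]}$ converges weakly to $\Phi(\su)|_{[z,1]}$, and read off convergence of the coordinates $s_i(n)\to s_i$ for those $i$ with $s_i>z$; then let $z\downarrow 0$. You instead observe that $S^\downarrow$ is compact (as a closed subset of $[0,1]^\N$) and that $\mcM^\#(X)$ is Hausdorff, whence the continuous bijection $\Phi$ is automatically a homeomorphism onto its image. Your route is cleaner and avoids the bookkeeping with the threshold $z$; the paper's route, on the other hand, is more constructive in that it shows explicitly how to recover the coordinates of $\su$ from $\Phi(\su)$, which is closer in spirit to the application in Theorem~\ref{c.fragm.sep}. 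Both are perfectly fine; your compactness shortcut is the more elegant of the two.
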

\begin{proof}
\begin{proofsteps}
 \step The mapping $\Phi$ is injective: For $\su \in S^\downarrow$, all $s_i, i \geq 1$ can be easily reconstructed.
 \step The mapping $\Phi$ is continuous: Let $\su(n), \su \in S^\downarrow$ with $\su(n) \to \su$.
 If $A$ is a bounded set in $X$ w.r.t.~$d$ with $\Phi(\su)(\partial A) = 0$, then it is clear that $ \Phi(\su(n))(A) \to \Phi(\su)(A)$ and we can use Proposition A.2.6.II~(d) in \cite{DVJ03}.
 
 \step The mapping $\Phi^{-1}|_{\Phi(S^\downarrow)}$ is continuous: Suppose $\Phi(\su(n)) \towh \Phi(\su)$.
 Fix $z \in (0,1)$ with $z \not \in \{\su_i \mid i \in \N\}$.
 Then we have $\Phi(\su(n))|_{[z,1]} \xrightarrow{w} \Phi(\su)|_{[z,1]}$.
 But this implies convergence of $\su(n)$ to $\su$ on those coordinates that lie in $[z,1]$.
 Since we may choose $z$ arbitrarily small, this suffices.
\end{proofsteps}
\end{proof}

\begin{proof}[Proof of Theorem~\ref{c.fragm.sep}]
We only provide the proof for $\mcG_1$, since the other proof is similar.
 \begin{proofsteps}
 \step Note that we can write
      \[ G_p(\su) =  \int x^p \, \Phi(\su)(\dx x), \ p \in \N. \]
    Thus, $G(\su(n)) \to G(\su) \, \forall G \in \mcG_1$ can be written as
      \[ \int x^p \, \Phi(\su(n))(\dx x) \to \int x^p \, \Phi(\su)(\dx x), \ p \in \N .\]
    We use Theorem~\ref{t.main} to show that $\Phi(\su(n)) \towh \Phi(\su)$ and this suffices for $\su(n) \to \su$ by Lemma~\ref{p.fragm}.
 \step Of course $(X,d)$ as before Lemma~\ref{p.fragm} is a separable metric space.
    Moreover, the class $\mcF = \linspan \{x \mapsto x^p \mid p \in \N\} \subset \mcC_b(X)$ of polynomials
    without constant term on $X$ satisfies the prerequisites of Theorem~\ref{t.main}, which is easy to check.
    Finally,
	$\Phi(\su) \in \mcM_{\mcF}^\#(X)$ for all $\su \in S^\downarrow$ since for all $p \in \N$:
	\[ \int x^p \, \Phi(\su) (\dx x) \leq \int x \, \Phi(\su)(\dx x) = \sum_{i\geq 1} s_i \leq 1 < \infty .\]
    Thus, Theorem~\ref{t.main} applies and yields the claim.
 \end{proofsteps}
\end{proof}

\begin{remark}
 Note that $\mcG_1 \not \subset \mcC(S^\downarrow)$: for $\su_i(n) = n^{-1} \1_{1\leq i \leq n}$, $i \in \N$, we have $\su(n) \to \underline{0}$, but $G_1(\su(n)) = 1 \not \to 0 = G_1(\underline{0})$.
\end{remark}

The stronger statement than Theorem~\ref{c.fragm.sep} including the continuity holds on the subset of decreasing sequences summing up to 1.

\begin{corollary}\label{c.fragm.top}
 On the subset\/ $S_1^\downarrow = \{\su \in S^\downarrow \mid \sum_{i\geq 1} s_i = 1 \}$, the set of
 functions\/ $\mcG_1$ generates the topology of pointwise convergence.
\end{corollary}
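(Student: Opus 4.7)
The plan is to show that on $S_1^\downarrow$ the initial topology $\tau_{\mcG_1}$ induced by $\mcG_1$ (the coarsest topology making every $G_p$ continuous) coincides with the subspace topology $\tau_p$ of pointwise convergence inherited from $[0,1]^\N$. First I would note that both topologies are metrizable, so sequential convergence is enough to describe them. The topology $\tau_p$ is metrizable as a subspace of the metrizable product $[0,1]^\N$, while $\tau_{\mcG_1}$ is induced by the countable family $(G_p)_{p\in\N}$, hence pseudo-metrized by
\[ \tilde d(\su,\tilde\su) := \sum_{p\in\N} 2^{-p}\bigl( |G_p(\su) - G_p(\tilde\su)|\wedge 1 \bigr). \]
This is in fact a metric since Theorem~\ref{c.fragm.sep}, applied to constant sequences, shows that $\mcG_1$ separates points of $S^\downarrow$.

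The inclusion $\tau_p \subseteq \tau_{\mcG_1}$ is immediate from Theorem~\ref{c.fragm.sep}: $\tau_{\mcG_1}$-convergence of $\su(n)$ to $\su$ means $G_p(\su(n)) \to G_p(\su)$ for every $p\in\N$, which by that theorem forces pointwise convergence $\su(n) \to \su$. By metrizability this yields continuity of the identity $(S_1^\downarrow, \tau_{\mcG_1}) \to (S_1^\downarrow, \tau_p)$, hence the inclusion of topologies.

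For the reverse inclusion $\tau_{\mcG_1} \subseteq \tau_p$, I must verify that each $G_p$ is continuous in the pointwise topology on $S_1^\downarrow$. This is precisely the step where the restriction to $S_1^\downarrow$ matters: on the full $S^\downarrow$ the $G_p$ fail to be continuous, as the remark after Theorem~\ref{c.fragm.sep} illustrates. Given $\su(n) \to \su$ pointwise in $S_1^\downarrow$, the common normalization $\sum_i s_i(n) = \sum_i s_i = 1$ combined with the componentwise convergence $s_i(n) \to s_i$ is exactly the setting of Scheff\'e's lemma for counting measure on $\N$, yielding $\sum_i |s_i(n) - s_i| \to 0$. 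Using the elementary bound $|a^p - b^p| \leq p|a - b|$ for $a, b \in [0,1]$, I then conclude
\[ |G_p(\su(n)) - G_p(\su)| \leq \sum_i |s_i(n)^p - s_i^p| \leq p \sum_i |s_i(n) - s_i| \xrightarrow{n\to\infty} 0. \]

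The main obstacle is exactly this continuity step: it fails without the mass-normalization, since coordinate mass can ``leak to infinity'' (as in the remark), and Scheff\'e's lemma is the tool that converts the fixed total mass into $\ell^1$-convergence, making every $G_p$ continuous in one stroke.
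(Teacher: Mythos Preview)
Your proof is correct, and the structure matches the paper's: one direction is Theorem~\ref{c.fragm.sep}, the other is continuity of each $G_p$ on $S_1^\downarrow$. The paper, however, proves continuity differently. It truncates at a level $z>0$ chosen so that the tail $\sum_i s_i \1_{s_i\le z}$ is small, invokes Lemma~\ref{p.fragm} (the homeomorphism $\Phi$ to boundedly finite measures) together with the Portmanteau-type characterisation to get convergence of the finite sums over $[z,1]$, and finally bounds the residual tail $\sum_i s_i(n)^p \1_{s_i(n)\le z}$ by $1-\sum_i s_i(n)\1_{s_i(n)>z}$, which is where the normalisation $\sum_i s_i(n)=1$ enters. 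Your route via Scheff\'e's lemma is more direct and more elementary: it converts the mass constraint into $\ell^1$-convergence in a single step and then handles all $G_p$ simultaneously via $|a^p-b^p|\le p|a-b|$, without ever touching the $\Phi$-embedding or the measure-theoretic machinery of Section~\ref{s.main}. The paper's argument, by contrast, keeps the example tied to the boundedly-finite-measure framework developed in the article. You also make the metrizability of both topologies explicit, which the paper leaves implicit when passing from sequential convergence to equality of topologies.
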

\begin{proof}

 First, the subset is a Polish space with the relative topology since it is a  $G_\delta$-subset of $S^\downarrow$.
 
 Let $\su(n), \su \in S_1^\downarrow$, $n \in \N$.
\begin{proofsteps}
 \step First suppose that $\su(n) \to \su$ as $n \to \infty$.
 We can use an approximation argument to get $G(\su(n)) \to  G(\su)$ for all $G \in \mcG_1$:
 Let $\eps >0$.
 Fix $z > 0$ s.t.~$\sum_{i\geq 1} s_i \1_{s_i \leq z} < \eps$ and $s_i \neq z$ for all $i \geq 1$.
 By Proposition A.2.6.II~(d) from \cite{DVJ03} and our Proposition~\ref{p.fragm}, we may choose $n$ so large that $\Phi(\su(n))|_{[z,1]}$ and $\Phi(\su)|_{[z,1]}$ are very close in the following sense:
 \[ \sup_{q \in \{1,p\}} \Bigl| \sum_{i \geq 1} s_i(n)^q\, \1_{s_i(n) > z} - \sum_{i \geq 1} s_i^q\, \1_{s_i(n) > z} \Bigr| = A(\eps) < \eps . \]
 Use that in the following equation
 \begin{align}
  G_p(\su(n)) & = \sum_{i \geq 1} s_i(n)^p\, \1_{s_i(n) > z} + \sum_{i\geq 1} s_i(n)^p\, \1_{s_i(n) \leq z} \\
  & = A(\eps) +  \sum_{i \geq 1} s_i^p\, \1_{s_i > z} + \sum_{i\geq 1} s_i(n)^p\, \1_{s_i(n) \leq z} .
 \end{align}
 Once we establish that the last term is small, we see that $G_p(\su(n)) \to G_p(\su)$.
 But the last term is bounded by:
 \begin{equation}
 	\sum_{i\geq 1} s_i(n)^p\, \1_{s_i(n) \leq z}  \leq \sum_{i\geq 1} s_i(n) \1_{s_i(n) \leq z} = 1 - \sum_{i\geq 1} s_i(n) \1_{s_i(n) > z} \leq 1- (1-\eps) = \eps.
 \end{equation}
 Note that it was crucial to know that $\sum_{i \geq 1} s_i =1$ in the last proof.

 \step The converse direction was established in Theorem~\ref{c.fragm.sep}.
\end{proofsteps}
\end{proof}

\begin{remark}
 An application in a similar spirit is given in \cite{infdiv}, where
 $X$ is the set of ultrametric measure spaces with diameter in $[0,2h)$ for certain $h>0$.
 Any ultrametric measure space with diameter in $[0,2h]$ can be written as a boundedly finite measure on $X$ (in a unique way similar to a prime factorization).
 This relation can be used for a result analogous to Corollary~\ref{c.fragm.top}.
\end{remark}

\bibliography{ggr}

\providecommand{\bysame}{\leavevmode\hbox to3em{\hrulefill}\thinspace}
\providecommand{\MR}{\relax\ifhmode\unskip\space\fi MR }
\providecommand{\MRhref}[2]{%
  \href{http://www.ams.org/mathscinet-getitem?mr=#1}{#2}
}
\providecommand{\href}[2]{#2}
\begin{thebibliography}{GPW09}

\bibitem[AEW13]{AEW13}
Siva Athreya, Michael Eckhoff, and Anita Winter, \emph{Brownian motion on
  {$\R$}-trees}, Trans.\ Amer.\ Math.\ Soc. \textbf{365} (2013), 3115--3150.
  \MR{3034461}

\bibitem[ALW15]{ALW15}
Siva Athreya, Wolfgang L{\"o}hr, and Anita Winter, \emph{Invariance principle
  for variable speed random walks on trees}, Ann.\ Probab. \textbf{in press}
  (2015), 49 pages, arXiv:1404.6290.

\bibitem[ALW16]{ALW14a}
\bysame, \emph{The gap between {G}romov-vague and {G}romov-{H}ausdorff-vague
  topology}, Stochastic Process.\ Appl. \textbf{126} (2016), no.~9, 2527--2553.
  \MR{3522292}

\bibitem[BCR76]{berg_positive_1976}
Christian Berg, Jens Peter~Reus Christensen, and Paul Ressel, \emph{Positive
  definite functions on abelian semigroups}, Mathematische Annalen \textbf{223}
  (1976), no.~3, 253--274.

\bibitem[Ber06]{Bertoin}
Jean Bertoin, \emph{Random fragmentation and coagulation processes}, Cambridge
  Studies in Advanced Mathematics, vol. 102, Cambridge University Press,
  Cambridge, 2006. \MR{2253162 (2007k:60004)}

\bibitem[BK10]{BlountKouritzin10}
Douglas Blount and Michael~A. Kouritzin, \emph{On convergence determining and
  separating classes of functions}, Stochastic Process.\ Appl. \textbf{120}
  (2010), no.~10, 1898--1907.

\bibitem[Bog07]{BogachevI}
V.~I. Bogachev, \emph{Measure theory, volume {I}}, Springer, 2007.

\bibitem[BP06]{Barczy20061831}
M{\'a}ty{\'a}s Barczy and Gyula Pap, \emph{Portmanteau theorem for unbounded
  measures}, Statist.\ Probab.\ Lett. \textbf{76} (2006), no.~17, 1831 -- 1835.

\bibitem[Coh80]{Cohn80}
Donald~L. Cohn, \emph{Measure theory}, Birkh{\"a}user, 1980.

\bibitem[DGP11]{DGP11}
A.~Depperschmidt, A.~Greven, and P.~Pfaffelhuber, \emph{Marked metric measure
  spaces}, Electron.\ Commun.\ Probab. \textbf{16} (2011), 174--188.

\bibitem[DVJ03]{DVJ03}
D.~J. Daley and D.~Vere-Jones, \emph{An introduction to the theory of point
  processes. {V}ol. {I}}, second ed., Springer-Verlag, New York, 2003.

\bibitem[GGR]{infdiv}
A.~Greven, P.~Gl\"ode, and T.~Rippl, \emph{Branching trees {I}: Concatenation
  and infinite divisibility}, in preparation.

\bibitem[GPW09]{GPW09}
Andreas Greven, Peter Pfaffelhuber, and Anita Winter, \emph{Convergence in
  distribution of random metric measure spaces ({$\Lambda$}-coalescent measure
  trees)}, Probab. Theory Related Fields \textbf{145} (2009), no.~1-2,
  285--322. \MR{2520129 (2011c:60008)}

\bibitem[GSW15]{GSW15}
Andreas Greven, Rongfeng Sun, and Anita Winter, \emph{Continuum space limit of
  the genealogies of interacting {Fleming-Viot} processes on {$\mathbb{Z}$}},
  arXiv:1508.07169, 2015.

\bibitem[HJ77]{HoffJ76}
J.~Hoffmann-J{\o}rgensen, \emph{Probability in banach space}, Springer, 1977.

\bibitem[HL06]{hult2006regular}
Henrik Hult and Filip Lindskog, \emph{Regular variation for measures on metric
  spaces}, Publ. Inst. Math.(Beograd)(NS) \textbf{80} (2006), no.~94, 121--140.

\bibitem[Hut09]{Hutz09}
M.~Hutzenthaler, \emph{Interacting diffusions and trees of excursions:
  convergence and comparison}, Electron.\ J.\ Probab. \textbf{17} (2009),
  1--49.

\bibitem[Kal83]{Kall83}
Olav Kallenberg, \emph{Random measures}, Akademie-Verlag/Academic Press,
  Berlin/London, 1983.

\bibitem[LC57]{LeCam}
L.~Le~Cam, \emph{Convergence in distribution of stochastic processes},
  University of California Publications in Statistics \textbf{2} (1957),
  207--236.

\bibitem[L{\"o}h13]{L13}
Wolfgang L{\"o}hr, \emph{Equivalence of {Gromov-Prohorov-} and {Gromov's}
  {$\underline\Box_\lambda$}\nobreakdash-metric on the space of metric measure
  spaces}, Electron.\ Commun.\ Probab. \textbf{18} (2013), no.~17, 1--10.
  \MR{3037215}

\bibitem[LRR14]{lindskog2014}
Filip Lindskog, Sidney~I. Resnick, and Joyjit Roy, \emph{Regularly varying
  measures on metric spaces: Hidden regular variation and hidden jumps},
  Probab. Surveys \textbf{11} (2014), 270--314.

\bibitem[LVW15]{LVW15}
Wolfgang L{\"o}hr, Guillaume Voisin, and Anita Winter, \emph{Convergence of
  bi-measure {$\mathbb R$-trees} and the pruning process}, Ann.\ Inst.\ H.\
  Poincar\'e Probab.\ Statist. \textbf{51} (2015), no.~4, 1342--1368.
  \MR{3414450}

\bibitem[Mun00]{Munkres00}
James Munkres, \emph{{Topology}}, 2nd ed., Pearson, 2000.

\bibitem[MZ84]{MeyerZheng84}
P.A. Meyer and W.~Zheng, \emph{Tightness criteria for laws of semimartingales},
  Ann.\ Inst.\ H.\ Poincar\'e Probab.\ Statist. \textbf{20} (1984), no.~4,
  353--372.

\bibitem[RY99]{RY99}
Daniel Revuz and Marc Yor, \emph{Continuous martingales and {B}rownian motion},
  third ed., Grundlehren der Mathematischen Wissenschaften, vol. 293,
  Springer-Verlag, Berlin, 1999. \MR{1725357}

\bibitem[Sat99]{Sato}
Ken-iti Sato, \emph{{L\'evy} processes and infinitely divisible distributions},
  Cambridge Studies in Advanced Mathematics, vol.~68, Cambridge University
  Press, Cambridge, 1999. \MR{1739520}

\end{thebibliography}
\bibliographystyle{amsalpha}

\end{document}